\newtheorem{theorem}{Theorem}[section]
\newtheorem{lemma}[theorem]{Lemma}
\newtheorem{corollary}[theorem]{Corollary}
\theoremstyle{definition}
\numberwithin{equation}{section}
\newcommand{\vp}{\varphi}
\newcommand{\clb}{\mathcal{B}}
\newcommand{\D}{\mathbb{D}}
\newcommand{\T}{\mathbb{T}}
\newcommand{\Z}{\mathbb{Z}}
\newcommand{\raro}{\rightarrow}
\newcommand{\C}{\mathbb{C}}
\newcommand{\M}{M}
\newcommand{\he}{H^2_E(\D)}
\newcommand{\Le}{L^2_E(\T)}
\newcommand{\linf}{L^{\infty}_{\clb(E)}(\T)}
\newcommand{\hinf}{H^{\infty}_{\clb(E)}(\D)}
\newcommand{\zhinf}{\overline{zH^{\infty}_{\clb(E)}(\D)}}
\begin{document}

%\today

\setcounter{page}{1}

%\today

\title[Similarity to contractions]{Foguel-type operators similar to contractions}

\author[Das]{Nilanjan Das}
\address{Indian Statistical Institute, Statistics and Mathematics Unit, 8th Mile, Mysore Road, Bangalore, 560059,
India}
\email{nilanjand7@gmail.com}

\author[Das]{Soma Das}
\address{Indian Statistical Institute, Statistics and Mathematics Unit, 8th Mile, Mysore Road, Bangalore, 560059, India}
\email{dsoma994@gmail.com}

\author[Sarkar]{Jaydeb Sarkar}
\address{Indian Statistical Institute, Statistics and Mathematics Unit, 8th Mile, Mysore Road, Bangalore, 560059,
India}
\email{jay@isibang.ac.in, jaydeb@gmail.com}

%\today

\subjclass[2020]{47B35, 32A37, 15B05, 47B38, 46E40, 30H10, 30H35}

\keywords{Contractions, Toeplitz operators, Hankel operators, Hilbert-Hankel matrix, Hardy spaces, similarity, power bounded operators}

\begin{abstract}
Pisier's celebrated counterexample to Halmos's similarity-to-contractions problem was based on $2 \times 2$ upper triangular block operator matrices involving three classical operators: forward and backward shifts on the diagonal and Hankel operators in the off-diagonal entry. Together with another classical object, namely Toeplitz operators, one can formulate another $2^3 -1 = 7$ types of $2 \times 2$ upper triangular block operator matrices, which we refer to as Foguel-type operators. In this paper, we give a complete characterization of all the seven Foguel-type operators being similar to contractions.
\end{abstract}

\maketitle

\tableofcontents

\section{Introduction}\label{sec intro}

In this paper, $E$ will be treated as an arbitrary but fixed separable Hilbert space over $\C$, and $\clb(E)$ will denote the space of all bounded linear operators $T: E \raro E$ with norm
\[
\|T\|: = \sup \{\|Tf\|: \|f\| = 1, f \in E\}.
\]
An operator $T \in \clb(E)$ is said to be a \textit{contraction} if $\|T\| \leq 1$. Equivalently,
\[
I - T^*T \geq 0.
\]
This positivity property plays a crucial role in analyzing the structure of linear operators. Evidently, not all operators enjoy such positivity properties. A natural question therefore arises: can a non-contractive operator be made into a contractive one by using an equivalent norm on $E$? This question is equivalent to the celebrated \textit{similarity problem}, which specifically asks for a characterization of when a given operator is similar to a contraction. Recall that $T \in \clb(E)$ is said to be \textit{similar to a contraction} if there exists an invertible operator $X \in \clb(E)$ such that
\[
X^{-1} T X,
\]
is a contraction. This is a classical problem, with its roots in Sz.-Nagy's 1947 paper on the characterization of operators similar to unitary operators \cite{Nagy}: $T \in\clb (E)$ is similar to a unitary operator if and only if
\[
\sup_{n\in\Z}\|T^n\|<\infty.
\]
In pointing out this result as a starting point of the similarity problem, Pisier remarked \cite{Pisier1} that Sz.-Nagy was “almost surely motivated by his own work with F. Riesz in ergodic theory, where contractions play a key role.” In 1959, this result led Sz.-Nagy \cite{Nagy 59} to consider the class of \textit{power bounded} operators, that is, operators $T \in \clb(E)$ such that
\[
\sup_{n \in \Z_+} \|T^n\| < \infty,
\]
and to the natural question:
\[
\text{Are all power bounded operators similar to contractions?}
\]
In the same paper \cite{Nagy 59}, Sz.-Nagy gave a positive answer to this question for all compact operators. However, in 1964, Foguel \cite{Foguel} answered Sz.-Nagy's question in the negative in the general case (also see Halmos \cite{H-64}). It was further observed, in view of the von Neumann inequality \cite{vN}, that a natural replacement of ``power boundedness'' is ``polynomial boundedness'': $T\in \clb(E)$ is \textit{polynomially bounded} if there exists $C>0$ such that
\[
\|p(T)\|\leq C\sup_{z\in\overline{\D}}|p(z)|,
\]
for all complex polynomials $p$, where $\D$ is the open unit disc in $\C$. In his celebrated 1970 paper \cite{Halmos}, Halmos posed the following question:
\[
\mbox{Are all polynomially bounded operators similar to contractions?}
\]
About 25 years later, the Halmos conjecture was refuted by Pisier \cite{Pisier2}. In the meantime, Paulsen \cite{Paulsen1} established a characterization: An operator is similar to a contraction if and only if it is completely polynomially bounded. In this context, also see Aleksandrov and Peller \cite{Aleksandrov}, and Bourgain \cite{Burgain}. We also refer readers to the excellent survey articles \cite{Davidson, Pisier1} for a more comprehensive overview of the similarity problem.

Denote by $H^2_E(\D)$ the $E$-valued Hardy space over $\D$ \cite{Radj-Rosen}. For each $X \in \clb(\he)$, define the block operator matrix $M^{S_{E}^*}_{S_{E}}(X)$ acting on $H^2_{E}(\D) \oplus H^2_{E}(\D)$ by
\[
M^{S_{E}^*}_{S_{E}}(X): = \begin{bmatrix}
S_{E}^* & X\\
0 & S_{E}
\end{bmatrix},
\]
where $S_E$ denotes the shift operator on $H^2_E(\D)$ \cite{Radj-Rosen}. Well before Pisier’s counterexample, this particular class of operators (in the case of $E = \C$) was proposed by Foia\c{s} and Williams \cite{Foias-Williams} and Peller \cite{VPeller} as a potential candidate for the study of Halmos’s similarity problem. One motivation for studying this class of operators comes from Foguel’s work \cite{Foguel}. His counterexample to Sz.-Nagy’s question was an operator of the form
\[
M^{S^*}_S(X) := \begin{bmatrix}
S^* & X\\
0 & S
\end{bmatrix},
\]
acting on $H^2(\D) \oplus H^2(\D)$, where $H^2(\D)$ denotes the scalar-valued Hardy space (in this case, we omit $\mathbb{C}$ from all subscripts). We remark that the operator $X$ in Foguel's counterexample is a specific orthogonal projection associated with a lacunary sequence.

In his counterexample to Halmos's question, Pisier considered operators of the form $M^{S_{E}^*}_{S_{E}}(X)$, where
\[
\text{dim} E = \infty,
\]
and $X$ is a Hankel operator related to anti-commutation relations. His example (and related results) suggests that the similarity of $M^{S_{E}^*}_{S_{E}}(X)$ to a contraction, where $X$ is a Hankel operator, is a rather subtle matter. In fact, a clear-cut characterization of those Hankel operators $X$ for which $M^{S_{E}^*}_{S_{E}}(X)$ is similar to a contraction is still unknown.

Following the approach of Foguel and Pisier, this paper examines the similarities to contractions for \textit{Foguel-type operators} defined as
\[
M^Y_Z(X)= \begin{bmatrix}
Y & X\\
0 & Z
\end{bmatrix},
\]
acting on $H^2_E(\D) \oplus H^2_E(\D)$, whenever
\[
Y = S_E \quad \text{ and } \quad Z = S_E \text{ or } S^*_E,
\]
or
\[
Y = Z = S_E^*,
\]
along with
\[
X = \text{ Toeplitz or Hankel operator},
\]
including the case where $Y = S_E^*$, $Z = S_E$, and $X =$ Toeplitz operator. We present a complete characterization of Foguel-type operators that are similar to contractions. Therefore, together with the case resolved by Pisier’s counterexample, this completes the analysis of the similarity problem for all seven $2 \times 2$ upper triangular block operator matrices whose diagonal entries are forward and backward shift operators and whose off-diagonal entries are either Toeplitz or Hankel operators.

Before presenting the main similarity results obtained in this paper, we introduce some notations. By $\Le$, we denote the $E$-valued $L^2$-space on $\T$ ($=\partial \D$). Denote by $\linf$ the Banach space of all $\clb(E)$-valued essentially bounded measurable functions on $\T$. Also, let $\hinf$ denote the Banach space of $\clb(E)$-valued bounded analytic functions on $\D$. Let $\Phi \in \linf$. The \emph{multiplication operator} $M_\Phi \in \clb(L^2_E(\T))$ is defined by
\[
M_\Phi(f)=\Phi f,
\]
for all $f\in L^2_E(\T)$. The {\it Toeplitz} operator $T_\Phi\in \clb(\he)$ and the {\it Hankel} operator $H_\Phi\in \clb(\he)$ with symbol $\Phi$ are defined as
\[
T_\Phi=P_{\he}M_\Phi|_{\he}\,\,\mbox{and}\,\,H_\Phi=P_{\he}M_\Phi J|_{\he},
\]
respectively, where $J:\Le \to \Le$ is the {\it flip} operator, defined by $(Jf)(z) =f(\bar{z})$ for all $f\in \Le$ and $z\in \T$-a.e.

Given a closed subspace $L$ of a Hilbert space $H$, we denote by $P_L$ the orthogonal projection from $H$ onto $L$. In this context, note that in the discussion above, we have treated $\he$ as a closed subspace of $L^2_E(\T)$ \cite{Radj-Rosen}. In what follows, and whenever it is clear from the context, $E$ will be treated as a closed subspace of $\he$. For brevity, we will frequently use the notation $\Phi P_E$ to denote the operator $M_\Phi P_E$, that is,
\[
M_\Phi P_E = \Phi P_E,
\]
for each $\Phi\in L_{\clb(E)}^\infty(\T)$. Moreover, in order to conveniently state the results, we introduce the class $\zhinf$, which consists of $\Phi \in \linf$ such that
\[
\Phi = \sum_{n=1}^{\infty}\bar{z}^n\Phi_n,
\]
where $\Phi_n \in \clb(E)$ for all $n \geq 1$. The following is a summary of the main similarity results concerning Foguel-type operators obtained in this paper:

\begin{theorem}\label{thm: intr}
Let $\Phi \in \linf$. The following hold:
\begin{enumerate}
\item $\M^{S_E}_{S^*_E}(T_{\Phi})$ is similar to a contraction if and only if
\[
\Phi \in (1-\bar{z}^2)\linf,
\]
and
\[
\Phi^* + \widetilde{\Phi}^* \in (1-z^2)(\hinf +\zhinf),
\]
where $\widetilde{\Phi} (z) = \Phi(\bar{z})$, $z \in \T$ a.e.
\item $\M^{S_E}_{S^*_E}(H_{\Phi})$ is similar to a contraction if and only if
\[
H_\Phi D_E \in \clb(\he),
\]
and there exists $\Psi \in \hinf$ such that
\[
(H_\Phi D_E S_E)^*P_E = \Psi P_E,
\]
where $D_E$ denotes the differential operator on the space of $E$-valued polynomials.
\item $\M^{S_E}_{S_E}(T_\Phi)$ is similar to a contraction if and only if
\[
\Phi = 0.
\]
\item $\M_{S_E}^{S_E}(H_\Phi)$ is similar to a contraction if and only if
\[
\Phi \in (z-\bar{z})\linf + \zhinf .
\]
\item $\M^{S^*_E}_{S_E}(T_{\Phi})$ is similar to a contraction if and only if
\[
\Phi \in (\bar{z} - z)\linf.
\]
\end{enumerate}
\end{theorem}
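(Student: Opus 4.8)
Write $N:=\M^{S_E^*}_{S_E}(T_\Phi)=\begin{bmatrix}S_E^*&T_\Phi\\0&S_E\end{bmatrix}$ on $\he\oplus\he$. I would prove both implications by essentially algebraic arguments, the only analytic inputs being the norm identity $\|T_\Psi\|_{\clb(\he)}=\|\Psi\|_{\linf}$ for operator-valued Toeplitz operators and a one-line equidistribution fact. The argument will in fact show that, for this particular operator, ``similar to a contraction'', ``power bounded'', and ``$\Phi\in(\bar z-z)\linf$'' are all equivalent.

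\textbf{Sufficiency.} For the ``if'' direction I would write $\Phi=(z-\bar z)\Psi$ with $\Psi\in\linf$ (possible since $(\bar z-z)\linf=(z-\bar z)\linf$), and use the one-sided Toeplitz multiplication rules ($T_{z\Psi}=T_\Psi S_E$ since $z$ is analytic, $T_{\bar z\Psi}=S_E^*T_\Psi$ since $\bar z$ is co-analytic) to get $T_\Phi=T_\Psi S_E-S_E^*T_\Psi$. Conjugating $N$ by the invertible operator $W:=\begin{bmatrix}I&T_\Psi\\0&I\end{bmatrix}$ then yields
\[
W^{-1}NW=\begin{bmatrix}S_E^*& S_E^*T_\Psi+T_\Phi-T_\Psi S_E\\0&S_E\end{bmatrix}=\begin{bmatrix}S_E^*&0\\0&S_E\end{bmatrix},
\]
which is a contraction; hence $N$ is similar to a contraction.

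\textbf{Necessity.} For the ``only if'' direction, since $N$ is similar to a contraction it is power bounded, so $C:=\sup_{n\ge1}\|N^n\|<\infty$. By induction $N^n=\begin{bmatrix}(S_E^*)^n&A_n\\0&S_E^n\end{bmatrix}$ with $A_n=\sum_{k=0}^{n-1}(S_E^*)^{n-1-k}T_\Phi S_E^{k}$, and since the corner embeddings have norm one, $\|A_n\|\le\|N^n\|\le C$ for every $n$. The key step is to put $A_n$ into closed form: using $(S_E^*)^a=T_{\bar z^a}$, $S_E^b=T_{z^b}$ and the one-sided multiplication rules, each summand equals $T_{\bar z^{n-1-k}z^{k}\Phi}=T_{z^{2k-n+1}\Phi}$, and summing over $k$ gives $A_n=T_{d_n\Phi}$, where
\[
d_n(z):=\sum_{k=0}^{n-1}z^{2k-n+1}=z^{1-n}\,\frac{z^{2n}-1}{z^{2}-1},\qquad |d_n(e^{i\theta})|=\frac{|\sin n\theta|}{|\sin\theta|}.
\]
Invoking $\|T_{d_n\Phi}\|_{\clb(\he)}=\|d_n\Phi\|_{\linf}=\operatorname*{ess\,sup}_{\theta}|d_n(e^{i\theta})|\,\|\Phi(e^{i\theta})\|$, the bound $\|A_n\|\le C$ forces $\|\Phi(e^{i\theta})\|\,|\sin n\theta|\le C|\sin\theta|$ for a.e.\ $\theta$ and every $n$. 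Finally I would take the supremum over $n$: since $\theta/(2\pi)$ is irrational for a.e.\ $\theta$, the orbit $\{n\theta\bmod 2\pi:n\ge1\}$ is dense and $\sup_n|\sin n\theta|=1$, so $\|\Phi(e^{i\theta})\|\le C|\sin\theta|=\tfrac{C}{2}|z-\bar z|$ a.e.\ on $\T$. This says precisely that $\Phi/(z-\bar z)\in\linf$, i.e.\ $\Phi\in(z-\bar z)\linf=(\bar z-z)\linf$, as wanted.

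\textbf{Main obstacle.} The computations are short, so the place requiring genuine care is the closed-form identity $A_n=T_{d_n\Phi}$: one must justify that composing powers of $S_E^*$ on the left of $T_\Phi S_E^{k}$ interacts with the projection onto $\he$ exactly as the one-sided Toeplitz multiplication rules predict, and, in the operator-valued setting, keep track of the order of the symbols. The supporting facts — $\|T_\Psi\|_{\clb(\he)}=\|\Psi\|_{\linf}$ for operator-valued symbols, the reduction of the a.e.\ pointwise estimate to a countable (entrywise) statement using separability of $E$, and the elementary equidistribution remark $\sup_n|\sin n\theta|=1$ for a.e.\ $\theta$ — are standard but should be recorded explicitly.
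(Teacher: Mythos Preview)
Your argument for part (5) is correct, and the necessity direction is genuinely different from the paper's route. For sufficiency the two approaches coincide: the paper invokes the Foia\c{s}--Williams criterion and exhibits $A=T_\Theta$ solving $S_E^*A-AS_E=T_\Phi$, which is exactly your conjugation by $W$ written out. For necessity, however, the paper again appeals to Foia\c{s}--Williams to obtain some $A\in\clb(\he)$ with $S_E^*A-AS_E=T_\Phi$, and then invokes the external characterization of Toeplitz\,+\,Hankel operators (Theorem~\ref{thm: T + H}, proved in a separate paper) to force $A=T_\Theta+H_\Omega$; the Hankel part cancels and one reads off $\Phi=(\bar z-z)\Theta$. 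Your approach bypasses both ingredients: you compute the off-diagonal block of $N^n$ in closed form as the Toeplitz operator $T_{d_n\Phi}$ with the Dirichlet-type symbol $d_n$, use $\|T_{d_n\Phi}\|=\|d_n\Phi\|_\infty$, and finish with the elementary density fact $\sup_n|\sin n\theta|=1$ for a.e.\ $\theta$. This is more self-contained and yields the quantitative bound $\|\Phi/(z-\bar z)\|_\infty\le \tfrac12\sup_n\|N^n\|$; it also shows directly that power boundedness already forces the symbol condition, without passing through the Foia\c{s}--Williams intertwiner. The paper's approach, on the other hand, is structurally uniform with its treatment of the other Foguel-type cases and highlights the role of Toeplitz\,+\,Hankel operators. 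Your identification of the ``main obstacle'' is accurate: the identity $(S_E^*)^{a}T_\Phi S_E^{b}=T_{\bar z^{a}z^{b}\Phi}$ follows from the one-sided rules $T_\Psi T_z=T_{z\Psi}$ and $T_{\bar z}T_\Psi=T_{\bar z\Psi}$ (scalar inner/outer factors, so no ordering issue with $\Phi$), and the passage from ``for each $n$, a.e.\ $\theta$'' to ``for a.e.\ $\theta$, all $n$'' only needs countability of $\mathbb N$; separability of $E$ enters, if at all, in making $\theta\mapsto\|\Phi(e^{i\theta})\|$ measurable.
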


These results are new, even for $E=\C$. However, we do not know how to apply the present methodology to the similarity problem for $\M^{S^*_E}_{S_E}(H_{\Phi})$, as considered by Pisier. Consequently, this case is not included in the above list nor in the list of the definitions of Foguel-type operators. The remaining two cases, namely, $\M^{S^*_E}_{S_E^*}(T_{\Phi})$ and $\M^{S^*_E}_{S_E^*}(H_{\Phi})$, follow from the above results and have been outlined in Section \ref{sec: T & H}.

We remark that, as will become evident in the proofs to follow, except for the cases of $\M^{S_E}_{S^*_E}(H_{\Phi})$ and $\M^{S_E}_{S_E}(T_{\Phi})$, all of the aforementioned results reveal a somewhat unexpected connection to the theory of Toeplitz + Hankel operators. This is an emerging area of independent interest and has been studied by several mathematicians (cf. \cite{NSJ, Deift, Ehr}). It is reasonable to suggest that the present article marks a step forward in supporting the applicability of Toeplitz + Hankel operators.

In the context of the problem of similarity of $\M^{S^*_E}_{S_E}(H_{\Phi})$ to contractions, we note that Aleksandrov and Peller \cite{Aleksandrov} resolved the case $E=\C$, while the work of Davidson and Paulsen \cite{Davidson-Paulsen} addresses the case when $\dim(E)<\infty$.

We apply some of our results to concrete cases, such as when $E = \C$. For instance, we prove that for the Hilbert-Hankel operator $H_\psi$, the corresponding Foguel-type operator $M^S_{S^*}(H_\psi)$ is never similar to a contraction. Recall that the matrix representation of the Hilbert-Hankel operator $H_\psi$ is given by:
\[
H_\psi = \left\lbrack \frac{1}{i+j+1}\right\rbrack_{i, j\geq 0}.
\]

It should be noted that there are already generic results in literature, which, in particular, also classify Foguel-type operators in terms of their similarity to contractions. For instance, \cite[Corollary 4.2]{Cassier} asserts that
\begin{equation}\label{Similar-intro-1}
\M^{S_E}_{S^*_E}(X)\, \mbox{is similar to contraction if and only if it is power bounded.}
\end{equation}

Also, we have the following general result, known as Foia\c{s}-Williams criterion (\cite {Foias-Williams}, also see \cite{CCFW} and see \cite[Corollary 5.15]{Paulsen} for a homological proof): Let $E$ and $E_*$ be Hilbert spaces, and let $Y\in\clb(E)$ and $Z\in\clb(E_*)$. Assume that $Y$ and $Z$ are similar to contractions, and either $Y$ is a coisometry or $Z$ is an isometry. Then $\M^Y_Z(X)$ on $E\oplus E_*$ is similar to a contraction if and only if there exists bounded linear operator $A: E_* \raro E$ such that
\begin{equation}\label{Similar-intro-2}
YA-AZ=X.
\end{equation}

A key step in proving Theorem \ref{thm: intr} (in particular, part (1) and part (2)) is an improvement of \eqref{Similar-intro-2} in the context of \eqref{Similar-intro-1}, which we also identify as one of our main results of this paper (see Theorem \ref{main-th1}): Given $X\in \clb(\he)$, the following are equivalent:
\begin{enumerate}
\item $\M^{S_E}_{S^*_E}(X)$ is similar to a contraction.
\item There exist $A \in \clb(\he)$ and $\Psi \in \hinf$ such that
\[
A^*P_E = \Psi P_E \text{ and } X = A - S_E^*AS_E^*.
\]
\item There exist $A \in \clb(\he)$ and $\Psi \in \hinf$ such that
\[
A P_E = X P_E,  A^* P_E = \Psi P_E, \text{ and } X S_E = A S_E - S_E^*A.
\]
\end{enumerate}

This result has been proved in the following section, which in fact generalizes Proposition 5.5 and Remark 5.6 from \cite{Cassier}, through a substantially different approach.

The remainder of the paper is organized as follows. In Section \ref{sec: TO and sim}, we present characterizations of Foguel-type operators associated with Toeplitz operators that are similar to contractions, while in Section \ref{sec: HO and sim}, we do the same for Hankel operators. In Section \ref{sec: T & H}, we complete the similarity problem in our context by considering all remaining cases of Foguel-type operators involving both Toeplitz and Hankel operators. The final section provides examples and some general remarks.

\section{A general criterion}\label{sec: gene crit}

The purpose of this section is to give a general characterization of operators of the form
\[
\M^{S_E}_{S^*_E}(X) = \begin{bmatrix}
	S_E & X\\
	0 & S^*_E
\end{bmatrix} : \he \oplus \he \to \he \oplus \he,
\]
for some $X \in \clb(H^2_E(\D))$ that are similar to contractions. It is known, in view of \eqref{Similar-intro-1}, that such an operator $M^{S_E}_{S^*_E}(X)$ is similar to a contraction if and only if it is power bounded. However, the answer in terms of power boundedness doesn't seem to give a satisfactory solution to the similarity problem. In this section, we address both of these characterization problems. We need a lemma.

Given the operator $M^{S_E}_{S^*_E}(X)$ as above, we set
\begin{equation}\label{s2-eq1}
X_n = S^{n-1}_EX +S^{n-2}_EXS^*_E + S^{n-3}_EXS^{*^2}_E+ \cdots +  XS^{*^{n-1}}_E
\end{equation}
for each $n \geq 1$.
The idea of this notion comes from the following computation:
\begin{equation}\label{eqn: X*n}
\left(\M^{S_E}_{S^*_E}(X)\right)^n = \begin{bmatrix}
S^n_E & X_n\\
0 & S^{*^n}_E
\end{bmatrix},
\end{equation}
for all $n \geq 1$. It is mentioned in \cite{Cassier-Timotin} that the power boundedness of $\M^{S_E}_{S^*_E}(X)$ is equivalent to the boundedness of the sequence $\{\|X_n\|\}_n$. Since no proof is readily available, we provide one below to prove the equivalence of (2) and (3) for the sake of completeness. The equivalence of (1) and (2) follows from \eqref{Similar-intro-1}.

\begin{lemma}\label{lma-pb}
Let $X \in \clb(H^2_E(\D))$. The following are equivalent:
\begin{enumerate}
\item $\M^{S_E}_{S^*_E}(X)$ is similar to contraction.
\item $\M^{S_E}_{S^*_E}(X)$ is power bounded.
\item $\sup_n \|X_n\| < \infty$.
\end{enumerate}
\end{lemma}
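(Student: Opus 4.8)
The plan is to prove the equivalence of (2) and (3), since the equivalence of (1) and (2) is handed to us by \eqref{Similar-intro-1}. The bridge between them is identity \eqref{eqn: X*n}, which I would establish first by a short induction on $n$: assuming $\left(\M^{S_E}_{S^*_E}(X)\right)^n$ has the displayed block-triangular form with the $(1,2)$-entry $X_n$, multiplying on the left by $\M^{S_E}_{S^*_E}(X)$ produces a $(1,1)$-entry $S_E^{n+1}$, a $(2,2)$-entry $S_E^{*^{n+1}}$, and a $(1,2)$-entry $S_E X_n + X S_E^{*^n}$, which is precisely $X_{n+1}$ according to the recursion implicit in \eqref{s2-eq1}. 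This reduces the lemma to showing that $\sup_n \left\| \M^{S_E}_{S^*_E}(X)^n \right\| < \infty$ if and only if $\sup_n \|X_n\| < \infty$.

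For the forward direction $(2) \Rightarrow (3)$, I would simply note that $X_n$ is a corner of the block matrix $\M^{S_E}_{S^*_E}(X)^n$; concretely, $X_n = P_1 \M^{S_E}_{S^*_E}(X)^n \iota_2$, where $\iota_2 : \he \to \he \oplus \he$ is the inclusion into the second summand and $P_1 : \he \oplus \he \to \he$ is the projection onto the first. Since inclusions and coordinate projections have norm one, $\|X_n\| \leq \left\| \M^{S_E}_{S^*_E}(X)^n \right\|$, and boundedness of the operator powers forces boundedness of $\{\|X_n\|\}_n$. For the reverse direction $(3) \Rightarrow (2)$, I would estimate the norm of the block matrix directly: for a vector $(f,g) \in \he \oplus \he$,
\[
\M^{S_E}_{S^*_E}(X)^n (f,g) = \left( S_E^n f + X_n g,\ S_E^{*^n} g \right),
\]
so that $\left\| \M^{S_E}_{S^*_E}(X)^n (f,g) \right\|^2 \leq \left( \|f\| + \|X_n\| \|g\| \right)^2 + \|g\|^2 \leq \left( 1 + \sup_m \|X_m\| \right)^2 \left( \|f\|^2 + \|g\|^2 \right)$ (using $\|S_E\| = \|S_E^*\| = 1$ and a crude bound on the cross term), giving $\sup_n \left\| \M^{S_E}_{S^*_E}(X)^n \right\| \leq 1 + \sup_m \|X_m\|$.

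There is no serious obstacle here; the only point requiring a little care is the bookkeeping in the induction step that matches $S_E X_n + X S_E^{*^n}$ against the defining sum for $X_{n+1}$ — one must read \eqref{s2-eq1} as $X_{n+1} = S_E^n X + S_E^{n-1} X S_E^* + \cdots + X S_E^{*^n}$ and recognize that $S_E X_n$ supplies the first $n$ terms while $X S_E^{*^n}$ supplies the last. Everything else is an elementary norm estimate, and since \eqref{Similar-intro-1} already supplies $(1) \Leftrightarrow (2)$, the three statements are then equivalent.
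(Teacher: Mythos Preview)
Your proposal is correct and follows essentially the same route as the paper: both invoke \eqref{Similar-intro-1} for $(1)\Leftrightarrow(2)$, use the block formula \eqref{eqn: X*n} to read off $\|X_n\|\leq\|\M^{S_E}_{S^*_E}(X)^n\|$ for $(2)\Rightarrow(3)$, and estimate $\|\M^{S_E}_{S^*_E}(X)^n(f,g)\|$ directly for $(3)\Rightarrow(2)$. The only cosmetic differences are that you spell out the induction for \eqref{eqn: X*n} and obtain the bound $1+\sup_m\|X_m\|$, whereas the paper states \eqref{eqn: X*n} without proof and arrives at $\sqrt{3}\max\{1,\sup_m\|X_m\|\}$.
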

\begin{proof}
We need to prove that (2) and (3) are equivalent. Let $\M^{S_E}_{S^*_E}(X)$ be power bounded. Then there exists $M> 0$ such that
\[
\sup_n \left \|\left(\M^{S_E}_{S^*_E}(X)\right)^n \right \| \leq M.
\]
Fix $f \in \he$ and $n\geq 1$. We know, by \eqref{eqn: X*n}, that
\[
\left(\M^{S_E}_{S^*_E}(X)\right)^n = \begin{bmatrix}
S^n_E & X_n\\
0 & S^{*^n}_E
\end{bmatrix},
\]
for all $n \geq 1$. Then
\begin{align*}
\|X_nf\|^2&\leq\|X_nf\|^2+\|S_E^*f\|^2\\
&=\left \|\left(\M^{S_E}_{S^*_E}(X)\right)^n \begin{bmatrix}
0\\ f
\end{bmatrix}\right \|^2 \\
&\leq \left \|\left(\M^{S_E}_{S^*_E}(X)\right)^n \right \|^2 \left \|\begin{bmatrix}
0\\ f
\end{bmatrix}\right \|^2\\
& \leq M^2 \|f\|^2,
\end{align*}
that is, $\|X_n\| \leq M$. This proves that $\{\|X_n\| \}_n$ is bounded. Conversely, assume that $\sup_n \|X_n\| \leq M$ for some $M>0$. Set
\[
M_1 = \max\{1,M\}.
\]
Then for any $f_1$ and $f_2$ in $\he$, we have
\begin{align*}
\left \|\left(\M^{S_E}_{S^*_E}(X)\right)^n \begin{bmatrix}
f_1\\ f_2
\end{bmatrix}\right \|^2 &= \left \| \begin{bmatrix}
S_E^nf_1 +X_nf_2\\ S^{*^n}_E f_2
\end{bmatrix}\right \|^2\\
& = \|S_E^nf_1 +X_nf_2\|^2 + \|S^{*^n}_E f_2\|^2\\
&  \leq \left(\|S_E^nf_1\| + \|X_nf_2\|\right)^2 + \|S^{*^n}_E f_2\|^2\\
&  \leq \left(\|f_1\| + M\|f_2\|\right)^2 + \| f_2\|^2\\
&  \leq M_1^2\left(\|f_1\| +\|f_2\|\right)^2 + M_1^2(\| f_1\|^2 + \| f_2\|^2)\\
&  \leq 2M_1^2\left(\| f_1\|^2 + \| f_2\|^2\right) + M_1^2(\| f_1\|^2 + \| f_2\|^2)\\
& = 3M_1^2 \left \|\begin{bmatrix}
f_1\\ f_2
\end{bmatrix}\right \|^2.
\end{align*}
Consequently,
\[
\left \|\left(\M^{S_E}_{S^*_E}(X)\right)^n\right \|\leq \sqrt{3}M_1,
\]
and hence $\M^{S_E}_{S^*_E}(X)$ is power bounded. This completes the proof of the lemma.
\end{proof}

With the above lemma in hand, we are now ready to state and prove the main theorem of this section, which gives Foia\c{s}-Williams type criteria for operator $\M^{S_E}_{S_E^*}(X)$ being similar to a contraction.

\begin{theorem}\label{main-th1}
Let $X\in \clb(\he)$. Then the following are equivalent:
\begin{enumerate}
\item $\M^{S_E}_{S^*_E}(X)$ is similar to a contraction.
\item There exist $A \in \clb(\he)$ and $\Psi \in \hinf$ such that
\[
A^*P_E = \Psi P_E,
\]
and
\[
X = A - S_E^*AS_E^*.
\]
\item There exist $A \in \clb(\he)$ and $\Psi \in \hinf$ such that
\[
A P_E = X P_E \text{ and } A^* P_E = \Psi P_E,
\]
and
\[
X S_E = A S_E - S_E^*A.
\]
\end{enumerate}
\end{theorem}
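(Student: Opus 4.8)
The plan is to establish the cycle of implications $(1)\Rightarrow(2)\Rightarrow(3)\Rightarrow(1)$, using Lemma \ref{lma-pb} as the bridge between similarity and the boundedness of the Cesàro-type sums $\{X_n\}$. First, for $(1)\Rightarrow(2)$: assuming $\M^{S_E}_{S^*_E}(X)$ is similar to a contraction, Lemma \ref{lma-pb} gives $\sup_n\|X_n\|=:M<\infty$, where $X_n=\sum_{k=0}^{n-1}S_E^{k}XS_E^{*\,n-1-k}$. The key observation is the telescoping identity $X_{n+1}=S_EX_n+XS_E^{*\,n}$, equivalently $X_{n+1}=S_EX_nS_E^*\cdot(\text{shift bookkeeping})$; more usefully, one checks $S_E^*X_{n+1}S_E^* - X_{n+1} = \cdots$ telescopes, so that $X = X_1$ should be recovered as $A - S_E^*AS_E^*$ once we produce the right limit operator $A$. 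Concretely, I would set $A$ to be a weak-$*$ (or weak-operator) limit point of a Cesàro average of the $\{X_n\}$, or directly of $\{S_E^{*\,n}X_{2n}S_E^{*\,n}\}$-type combinations; boundedness of $\{X_n\}$ gives such a limit point in the unit ball (scaled by $M$) of $\clb(\he)$ by Banach--Alaoglu. Then the algebraic relation $X_{n+1}-S_EX_n = XS_E^{*\,n}$, upon applying $S_E^*$ on the left, passing to the limit, and using $S_E^*S_E=I$ together with $S_E^{*\,n}\to 0$ strongly, should collapse to $X = A - S_E^*AS_E^*$. For the condition $A^*P_E=\Psi P_E$ with $\Psi\in\hinf$: note $X_n P_E$ simplifies dramatically because $S_E^{*\,k}P_E=0$ for $k\ge 1$ (as $E=\kr S_E^*$), so $X_nP_E = S_E^{n-1}XP_E$; feeding this into the construction of $A$ forces $A^*$ to map $\he$ into the analytic part in a way that $A^*P_E$ is multiplication by a bounded analytic $\clb(E)$-valued function — this is essentially the assertion that $A^*$ restricted to $E$ has "analytic" Taylor coefficients, which is where $\hinf$ enters.

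For $(2)\Rightarrow(3)$: given $A$ and $\Psi$ as in (2), I would verify the three relations in (3) by direct algebra. From $X = A - S_E^*AS_E^*$ we get $XP_E = AP_E - S_E^*AS_E^*P_E = AP_E$ since $S_E^*P_E=0$; that is the first relation. The condition $A^*P_E=\Psi P_E$ is carried over verbatim as the second. For the third, compute $XS_E = AS_E - S_E^*AS_E^*S_E = AS_E - S_E^*A$, using $S_E^*S_E=I$; that is exactly $XS_E=AS_E-S_E^*A$. So this implication is a short computation with no obstruction.

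For $(3)\Rightarrow(1)$: given $A$ and $\Psi$ satisfying the three relations of (3), the goal is to show $\sup_n\|X_n\|<\infty$ and invoke Lemma \ref{lma-pb}. The strategy is to prove by induction a closed formula such as $X_n = S_E^{n-1}A - S_E^{*}AS_E^{*\,n-1} + (\text{boundary corrections involving }P_E)$, and then bound each piece. The relation $XS_E=AS_E-S_E^*A$ controls $X_n$ on the range of $S_E$, i.e. on $zH^2_E$, while the relation $XP_E=AP_E$ controls $X_n$ on the complementary copy of $E$; the hypothesis $A^*P_E=\Psi P_E$ with $\Psi$ bounded analytic is what keeps the $E$-piece uniformly bounded (without analyticity one would pick up growing terms). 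Summing the telescoped expression, the "bulk" terms $S_E^{n-1}A$ and $S_E^{*}AS_E^{*\,n-1}$ are each bounded by $\|A\|$, and the accumulated boundary terms form a series whose partial sums are controlled by $\|\Psi\|_\infty$ because consecutive contributions differ by an application of $S_E^*$ that kills everything except a vanishing tail. I expect the main obstacle to be exactly this step: getting the bookkeeping of the boundary/$P_E$ terms right so that their contribution to $X_n$ stays bounded uniformly in $n$ — in particular, correctly exploiting that $\Psi\in\hinf$ (rather than merely $\Psi\in\linf$) to prevent a linear-in-$n$ blow-up. Once $\sup_n\|X_n\|<\infty$ is established, Lemma \ref{lma-pb} closes the cycle. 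Throughout, the subtlety shared by all three implications is the interplay between the operator identity $X=A-S_E^*AS_E^*$ (which determines $A$ only up to the kernel of the map $B\mapsto B-S_E^*BS_E^*$) and the normalization forced by $P_E$, so I would be careful to track which choice of $A$ is being used at each stage and to note that the extra relations in (3) pin it down more rigidly than (2) does.
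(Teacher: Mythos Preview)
Your overall scheme and the easy implication $(2)\Rightarrow(3)$ match the paper. The paper actually argues $(2)\Leftrightarrow(3)$ and then $(1)\Leftrightarrow(2)$ rather than a cycle, but that is immaterial.

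The substantive gap is in your $(1)\Rightarrow(2)$, specifically the claim that $A^*P_E=\Psi P_E$ for some $\Psi\in\hinf$. Your observation that $X_nP_E=S_E^{\,n-1}XP_E$ is correct, but it gives information about $AP_E$, not about $A^*P_E$: indeed, if $A$ is a WOT limit of $S_E^{*\,n-1}X_n$ (which is essentially what the paper uses), then $S_E^{*\,n-1}X_nP_E=XP_E$ for every $n$, yielding only $AP_E=XP_E$. Nothing in this tells you that $e\mapsto A^*e$ lands in the range of an analytic Toeplitz operator. The paper's route is genuinely different and more delicate: after establishing $X=A-S_E^*AS_E^*$, it substitutes this back into the formula for $X_n$ and, using $S_ES_E^*=I-P_E$, telescopes to obtain
\[
X_n=S_E^{\,n-1}A-S_E^*AS_E^{*\,n}-S_E^{\,n-1}P_EA+Q_n,\qquad Q_n=\sum_{k=0}^{n-1}S_E^{\,n-1-k}P_EA\,S_E^{*\,k}.
\]
Boundedness of $\{X_n\}$ then forces $\sup_n\|Q_n^*\|<\infty$. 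Evaluating $Q_n^*$ on $E$-valued polynomials shows $Q_n^*(z^ke)=z^{\,n-1-k}A^*e$ for $0\le k\le n-1$, and a ``reversal'' trick turns the uniform bound on $Q_n^*$ into boundedness of the densely defined map $\sum z^ie_i\mapsto\sum z^iA^*e_i$, which is then identified as $T_\Psi$ with $\Psi\in\hinf$. This is the step you are missing; without it, you have no mechanism to pass from bounds on $X_n$ (or on $X_nP_E$) to analyticity of $A^*|_E$.

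Your sketch of $(3)\Rightarrow(1)$ is on the right track, and in fact the same decomposition $X_n=S_E^{\,n-1}A-S_E^*AS_E^{*\,n}-S_E^{\,n-1}P_EA+Q_n$ is exactly the closed formula you are looking for; the ``boundary corrections'' you anticipate are precisely $Q_n$, and the bound $\|Q_n^*f\|\le\|\Psi\|_\infty\|f\|$ follows directly from $A^*P_E=T_\Psi P_E$ by the same polynomial computation run in reverse. So once you find the correct $Q_n$, both directions $(1)\Leftrightarrow(2)$ go through with the same object; your proposal just hasn't identified it yet.
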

\begin{proof}
We first prove that (2) and (3) are equivalent. Assuming (2) is true, there exist $A \in \clb(\he)$ and $\Psi \in \hinf$, such that $A^*P_E = \Psi P_E$, and
\[
X = A - S_E^*AS_E^*.
\]
The above condition immediately implies that
\[
XS_E = AS_E - S_E^*A.
\]
Again, $X = A - S_E^*AS_E^*$ gives
\[
XP_E = (A - S_E^*AS_E^*) P_E = A P_E - S_E^*AS_E^* P_E = A P_E,
\]
as $S_E^* P_E = 0$. Therefore, we have $XP_E = A P_E$. Conversely, if (3) is true, then there exist $A \in \clb(\he)$ and $\Psi \in \hinf$ such that $A P_E = X P_E$, $A^* P_E = \Psi P_E$, and
\[
XS_E = AS_E - S_E^*A.
\]
In view of $I = P_E+S_ES_E^*$, we compute
\begin{align*}
X&=X(P_E+S_ES_E^*)\\
&=AP_E+(AS_E-S_E^*A)S_E^*\\
&=A(P_E+S_ES_E^*)-S_E^*AS_E^*\\
&=A-S_E^*AS_E^*.
\end{align*}
This finishes the proof of the equivalence of (2) and (3). We now proceed to establish the equivalence between (1) and (2), which requires more involved computations. First, assume that (1) is true. In particular, $\M^{S_E}_{S^*_E}(X)$ is power bounded, and hence by Lemma \ref{lma-pb}, we have
\[
\sup_n \|X_n\| < \infty,
\]
where $X_n$, $n \geq 1$, is given as in \eqref{s2-eq1}:
\[
X_n = S^{n-1}_EX +S^{n-2}_EXS^*_E + S^{n-3}_EXS^{*^2}_E+ \cdots +  XS^{*^{n-1}}_E.
\]
For time being, fix a natural number $n$. Note that
\[
S_EX_n = S^{n}_EX +S^{n-1}_EXS^*_E + S^{n-2}_EXS^{*^2}_E+ \cdots +  S_EXS^{*^{n-1}}_E,
\]
and
\[
X_nS_E^* =S^{n-1}_EXS_E^* +S^{n-2}_EXS^{*^2}_E + S^{n-3}_EXS^{*^3}_E+ \cdots +  XS^{*^{n}}_E.
\]
By taking the difference of the above quantities, we have
\[
S_EX_n- X_nS_E^* =S^{n}_EX - XS^{*^{n}}_E.
\]
Multiplying by $S_E^{*n}$, this yields
\[
S^{*^{n-1}}_EX_n- S^{*^{n}}_EX_nS_E^* =X - S^{*^{n}}_EXS^{*^{n}}_E,
\]
that is,
\[
S^{*^{n-1}}_EX_{n}- S^*_E\left(S^{*^{n-1}}_EX_{n}\right)S_E^* =X - S^{*^{n}}_EXS^{*^{n}}_E.
\]
We want to apply a limiting argument to the above, or, more specifically, for a subsequence. We proceed as follows. Since $\{\|X_n\|\}_n$ is bounded, it follows that the sequence $\{\|S^{*^{n-1}}_E X_n\|\}_n$ is also bounded (note that $\|S_E^{*m}\| = 1$ for all $m \in \Z_+$). Furthermore, by the Banach–Alaoglu theorem, the closed unit ball of $\clb(\he)$ is compact in the weak operator topology. Therefore, there exists a subsequence, which we denote by $\{S_E^{*{n_j-1}} X_{n_j}\}$, that converges in the weak operator topology. Let
\[
WOT-\lim_n S^{*^{n_j-1}}_EX_{n_j} = A \in \clb(\he).
\]
This, together with the fact that ${S_E^*}^n\to 0$ as $n\to \infty$ in the strong operator topology, finally yields the desired limiting property:
\begin{equation}\label{s2-eq3}
A-S^*_EAS^*_E =X.
\end{equation}
Substituting the above in the expression of $X_n$, we get
\[
X_n = S^{n-1}_E(A-S^*_EAS^*_E) +S^{n-2}_E(A-S^*_EAS^*_E)S^*_E + \cdots + (A-S^*_EAS^*_E)S^{*^{n-1}}_E.
\]
As $S_E S_E^* = I - P_E$, we have
\[
X_n = S^{n-1}_E A - S^{n-2}_E(I-P_E)AS^*_E +S^{n-2}_EAS^*_E - S^{n-3}_E(I-P_E)AS^{*^2}_E + \cdots + AS^{*^{n-1}}_E-S^*_EAS^{*^{n}}_E.
\]
We write this as
\[
X_n = S^{n-1}_E A -S^*_EAS^{*^{n}}_E -S^{n-1}_EP_EA+ Q_n,
\]
where
\[
Q_n := S^{n-1}_EP_EA+S^{n-2}_EP_EAS^*_E+S^{n-3}_EP_EAS^{*^2}_E + \cdots +P_EA S^{*^{n-1}}_E.
\]
Since
\[
\|S^{n-1}_E A -S^*_EAS^{*^{n}}_E -S^{n-1}_EP_EA\| \leq 3 \|A\|,
\]
boundedness of $\{\|X_n\|\}_n$ ensures that $\{\|Q_n\|\}_n$ is also bounded. Consequently, the sequence $\{\|Q_n^*\|\}_n$ is bounded as well. Note that
\begin{equation}\label{s2-eq2}
Q_n^* = A^*P_E S^{*^{n-1}}_E +S_EA^*P_ES^{*^{n-2}}_E+S^{2}_EA^*P_E S^{*^{n-3}}_E+ \cdots +S^{{n-1}}_EA^* P_E.
\end{equation}
Suppose now $\|Q_n^*\|\leq C$, for some constant $C>0$. Fix $n \in \mathbb{N}$ and $e\in E$. By using the property of the backward shift $S_E^*$, we have
\begin{align*}
Q_n^*(z^ke) = \begin{cases}
z^{n-k-1}A^*e & \text{if $0\leq k\leq n-1$}\\
0 & \text{if $ k \geq n$}.
\end{cases}
\end{align*}
Given an $E$-valued polynomial
\[
\mathbf{p} = e_0+ ze_1+ z^2e_2+ \cdots + z^{n-1}e_{n-1} \in \he,
\]
with $e_i\in E$ for all $i=0, 1, \ldots, n-1$, define
\[
\tilde{\mathbf{p}} =e_{n-1} + ze_{n-2}+ + z^2e_{n-3}+ \cdots +z^{n-1}e_0.
\]
Therefore, we have
\begin{align*}
Q_n^*(\tilde{\mathbf{p}}) &= Q_n^*(e_{n-1}) + Q_n^*(ze_{n-2}) + Q_n^*(z^2e_{n-3})+ \cdots + Q_n^*(z^{n-1}e_0)\\
& = z^{n-1}A^*e_{n-1} + z^{n-2}A^*e_{n-2} +z^{n-3}A^*e_{n-3}+ \cdots + A^*e_0.
\end{align*}
Define
\[
R(\mathbf{p}):=\sum_{i=0}^{n-1}z^iA^*e_i,
\]
for $E$-valued polynomial $\mathbf{p}$ as above. It is now evident that
\begin{equation}\label{inq-1}
\|R(\mathbf{p})\|=\|Q_n^*(\tilde{\mathbf{p}})\|\leq C \|\tilde{\mathbf{p}}\|= C\|{\mathbf{p}}\|.
\end{equation}
This means $R$ is a well-defined bounded linear operator from the space of $E$-valued polynomials to $\he$. Since $E$-valued polynomials form a dense subspace of $\he$, the operator $R$ admits a unique bounded linear extension to an element of $\clb(\he)$, which we also call $R$. Furthermore, it is easy to see that \[
S_ER(\mathbf{p}) =RS_E(\mathbf{p}),
\]
for each $E$-valued polynomial $\mathbf{p}\in\he$. Now pick $f\in \he$. Choose a sequence of $E$-valued polynomials $\{\mathbf{q}_n\}\subseteq \he$ such that
\[
\lim_n \mathbf{q}_n = f,
\]
in $\he$. By a standard limiting argument, we have
\[
S_ER(f) = S_E \lim_{n\to \infty} R(\mathbf{q}_n) =\lim_{n\to \infty}S_E R(\mathbf{q}_n) = \lim_{n\to \infty}RS_E(\mathbf{q}_n) = RS_E(f),
\]
that is, $S_ER = RS_E$. Therefore, $R$ is an analytic Toeplitz operator, that is, there exists an analytic symbol $\Psi \in \hinf$ such that (cf. \cite[Corollary 3.20]{Radj-Rosen})
\[
R=T_\Psi.
\]
From the definition of $R$, we see that
\[
A^*e = R e = T_\Psi e =\Psi e,
\]
for all $e\in E$. This fact combined with (\ref{s2-eq3}) ascertains the validity of (2). On the other hand, suppose (2) is true. Then there exist $A \in \clb(\he)$ and $\Psi \in \hinf$ such that
\[
X = A - S_E^*AS_E^*,
\]
and $A^* P_E = T_\Psi P_E$. To show that (1) is true under this assumption, it is enough to show that
\[
\sup_n \|X_n\| < \infty,
\]
where $X_n$'s are given as in \eqref{s2-eq1} (see Lemma \ref{lma-pb}). Using the equality $X = A - S_E^*AS_E^*$, we can now perform exactly the same lines of computation succeeding (\ref{s2-eq3}), to obtain an expression for $X_n$ as
\[
X_n = S^{n-1}_E A -S^*_EAS^{*^{n}}_E -S^{n-1}_EP_EA+ Q_n.
\]
As a consequence, showing that $\{\|X_n\|\}_n$ is bounded amounts to showing $\{\|Q_n^*\|\}_n$ (as already pointed out earlier, where $Q_n^*$ is defined in (\ref{s2-eq2})) is bounded. Fix $f= \sum_{i=0}^{\infty}z^ie_i \in \he$, and observe that
\begin{align*}
\|Q_n^*(f)\|&=\left\|\sum_{i=0}^{n-1}Q_n^*(z^ie_i)+\sum_{i=n}^\infty Q_n^*(z^ie_i)\right\|
\\
&= \|z^{n-1}A^*e_0 +z^{n-2}A^*e_1 +\cdots + A^*e_{n-1}\|
\\
&= \|z^{n-1}\Psi e_0 +z^{n-2}\Psi e_1 +\cdots + \Psi e_{n-1}\|
\\
&=\|T_\Psi(z^{n-1}e_0 +z^{n-2}e_1 +\cdots +e_{n-1})\|
\\
& \leq \|T_\Psi\|\|z^{n-1}e_0 +z^{n-2}e_1 +\cdots + e_{n-1}\|
\\
& \leq\|\Psi\|_{\infty} \|f\|,
\end{align*}
which completes the proof.
\end{proof}

The above theorem provides a somewhat satisfactory answer to the question of the similarity of $\M^{S_E}_{S^*_E}(X)$ to contractions in general. However, one might expect a more concrete answer when the operator $X$ is more natural and tied with the shift operator, particularly in the cases of Toeplitz and Hankel operators. This will be the theme of the following two sections.

\section{Toeplitz operators}\label{sec: TO and sim}

The purpose of this section is to present a concrete characterization of symbols $\Phi \in \linf$ such that $\M^{S_E}_{S_E^*}(T_\Phi)$ is similar to a contraction. On this occasion, we will examine the usefulness of the equivalence of the conditions in Theorem \ref{main-th1} in the context of Toeplitz operators.

We need to recall some results concerning Toeplitz + Hankel operators. Let $T, H \in \clb(\he)$. Then $T$ is a Toeplitz operator if and only if
\[
S_E^* T S_E = T.
\]
And, $H$ is a Hankel operator if and only if
\[
S_E^* H = H S_E.
\]
An operator $X \in \clb(\he)$ is said to be \textit{Toeplitz + Hankel operator} if there exist symbols $\Phi, \Omega \in \linf$ such that
\[
X = T_\Phi + H_\Omega.
\]
In this case, we simply say that $X$ is a Toeplitz + Hankel operator. In what follows, we will use the following characterization of Toeplitz + Hankel operators established in \cite[Theorem 2.2]{NSJ}:

\begin{theorem}\label{thm: T + H}
Let $A \in \clb(\he)$. The following conditions are equivalent:
\begin{enumerate}
\item $A$ is a Toeplitz + Hankel operator.
\item $A S_E - S_E^*A$ is a Toeplitz operator.
\item $S_E^*A S_E - A$ is a Hankel operator.
\item $A S_E + S_E^{*2} A S_E = S_E^* A + S_E^* A S_E^2$.
\end{enumerate}
\end{theorem}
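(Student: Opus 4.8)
The plan is to establish the implications $(1)\Rightarrow(4)$, $(4)\Leftrightarrow(2)$, $(4)\Leftrightarrow(3)$ and $(3)\Rightarrow(1)$; chained together these give all four equivalences, and essentially all the difficulty is concentrated in $(3)\Rightarrow(1)$. Throughout I would identify an operator on $\he$ with its matrix relative to the orthogonal decomposition $\he=\bigoplus_{n\geq0}z^nE$, so that ``Toeplitz'' means the matrix is constant along diagonals and ``Hankel'' means it is constant along antidiagonals.

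For $(1)\Rightarrow(4)$ I would use linearity: since (4) is linear in $A$, it suffices to verify it for $A=T_\Phi$ and for $A=H_\Omega$ separately. In the first case one collapses each side of (4) to $T_\Phi S_E+S_E^*T_\Phi$ by repeated use of $S_E^*T_\Phi S_E=T_\Phi$; in the second case one collapses each side to $H_\Omega S_E+H_\Omega S_E^3$ by repeated use of $S_E^*H_\Omega=H_\Omega S_E$. The equivalences $(4)\Leftrightarrow(2)$ and $(4)\Leftrightarrow(3)$ are then purely formal: expanding the Toeplitz test $S_E^*\big(AS_E-S_E^*A\big)S_E=AS_E-S_E^*A$ and using that $S_E$ is an isometry rearranges exactly to (4), and likewise expanding the Hankel test $S_E^*\big(S_E^*AS_E-A\big)=\big(S_E^*AS_E-A\big)S_E$ rearranges to (4). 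After this, only $(3)\Rightarrow(1)$ remains.

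For $(3)\Rightarrow(1)$, put $H_0:=S_E^*AS_E-A$, a bounded Hankel operator by hypothesis; its matrix entries depend only on $i+j$, and, being the coefficients of the vectors $H_0e$ for $e\in E\subset\he$, they tend to $0$. The key idea is to recover the Toeplitz and Hankel parts of $A$ from the bounded sequence of compressions $\{S_E^{*N}AS_E^N\}_{N\geq1}$. Telescoping gives
\[
A-S_E^{*N}AS_E^N=-\sum_{k=0}^{N-1}S_E^{*k}H_0S_E^k,
\]
where each $S_E^{*k}H_0S_E^k$ is again a Hankel operator (a one-line consequence of $S_E^*H_0=H_0S_E$); hence $A-S_E^{*N}AS_E^N$ is a Hankel operator for every $N$, of norm at most $2\|A\|$. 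Since bounded subsets of $\clb(\he)$ are relatively compact in the weak operator topology, some subnet $S_E^{*N_\alpha}AS_E^{N_\alpha}$ converges, say to $T\in\clb(\he)$. Now $(S_E^{*N}H_0S_E^N)_{ij}=(H_0)_{i+j+2N,\,0}\to0$ as $N\to\infty$, so $S_E^{*N}H_0S_E^N\to0$ in the weak operator topology; combined with
\[
S_E^{*(N+1)}AS_E^{N+1}-S_E^{*N}AS_E^N=S_E^{*N}H_0S_E^N,
\]
this forces $S_E^{*(N_\alpha+1)}AS_E^{N_\alpha+1}\to T$ as well. Because $X\mapsto S_E^*XS_E$ is continuous for the weak operator topology, $S_E^*TS_E=T$, so $T=T_\Phi$ for some $\Phi\in\linf$. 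On the other hand $A-T$ is the weak-operator limit of the Hankel operators $A-S_E^{*N_\alpha}AS_E^{N_\alpha}$, and the relation $S_E^*X=XS_E$ passes to weak-operator limits, so $A-T$ is a bounded Hankel operator, i.e.\ $A-T=H_\Omega$ for some $\Omega\in\linf$. Therefore $A=T_\Phi+H_\Omega$, which is (1).

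I expect the genuine obstacle to be exactly the manufacturing of honest $\linf$ symbols in $(3)\Rightarrow(1)$. One is tempted to reconstruct $\Phi$ and $\Omega$ coefficient-by-coefficient from the matrix of $A$ — solving the obvious recurrence for the diagonal coefficients and then reading off the antidiagonal coefficients — but this formal construction leaves the boundedness of the two pieces opaque, and direct attempts to check it turn out to be circular. The compactness-plus-telescoping device above is what breaks the deadlock: it produces the Toeplitz part as a weak cluster point of $S_E^{*N}AS_E^N$ and the Hankel part as the uniformly bounded remainder, and the one delicate point — guaranteeing that the cluster point is truly invariant under $X\mapsto S_E^*XS_E$, rather than merely ``invariant along a subnet'' — is precisely where hypothesis (3) is used, via $S_E^{*N}H_0S_E^N\to0$ weakly.
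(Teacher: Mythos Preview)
The paper does not prove this theorem; it is quoted verbatim from \cite[Theorem~2.2]{NSJ} and used as a black box, so there is no in-paper argument to compare against. On its own merits your proof is correct: the equivalences $(1)\Rightarrow(4)$, $(4)\Leftrightarrow(2)$, $(4)\Leftrightarrow(3)$ are exactly as you say, and your $(3)\Rightarrow(1)$ argument---telescope $A-S_E^{*N}AS_E^N$ into a bounded Hankel operator, take a WOT cluster point $T$ of $S_E^{*N}AS_E^N$, and use $S_E^{*N}H_0S_E^N\to0$ (WOT) to force $S_E^*TS_E=T$---goes through. The only step I would tighten is the justification that $S_E^{*N}H_0S_E^N\to0$ in the weak operator topology: pointing to ``matrix entries tending to $0$'' is a little loose in the vector-valued setting, since the entries are themselves operators in $\clb(E)$ and the double sum over $i,j$ needs the uniform bound to be handled. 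The cleaner one-liner is $S_E^{*N}H_0S_E^N=H_0S_E^{2N}$ and $\langle H_0S_E^{2N}f,g\rangle=\langle S_E^{2N}f,H_0^*g\rangle\to0$ because $S_E^{*2N}\to0$ strongly. Finally, you are implicitly invoking the operator-valued Brown--Halmos and Nehari/Page theorems to pass from the intertwining relations $S_E^*TS_E=T$ and $S_E^*(A-T)=(A-T)S_E$ to genuine symbols $\Phi,\Omega\in\linf$; this is standard, but worth naming explicitly.
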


Recall that
\[
\zhinf = \{\Phi \in \linf: \Phi = \sum_{n=1}^{\infty}\bar{z}^n\Phi_n, \Phi_n \in \clb(E), n \geq 1\}.
\]
Recall also that for each $\Psi \in \linf$, we define $\tilde\Psi \in \linf$ by
\[
\widetilde{\Psi}(z) = \Psi(\bar{z}),
\]
for $z \in \T$ almost everywhere. Moreover, by the definition of Toeplitz operators, we have the following relation:
\[
T_\Phi P_E = P_{\he} M_\Phi|_{\he} P_E = P_{\he} M_\Phi P_E,
\]
which, according to our convention, is written more concisely as
\[
T_\Phi P_E = P_{\he}\Phi P_E.
\]
In other words, for each $e \in E$, we have
\[
(T_\Phi P_E)(e) = (P_{\he}\Phi P_E)(e) = P_{\he}(\Phi e).
\]
With this in hand, we now proceed to the main result of this section.

\begin{theorem}\label{Cor-th1}
Let $\Phi \in \linf$. Then $\M^{S_E}_{S^*_E}(T_{\Phi})$ is similar to a contraction if and only if
\[
\Phi \in (1-\bar{z}^2)\linf,
\]
and
\[
\Phi^* + \widetilde{\Phi}^* \in (1-z^2)(\hinf +\zhinf).
\]
\end{theorem}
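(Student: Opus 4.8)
My plan is to run the entire argument through Theorem~\ref{main-th1}, in its third equivalent form, combined with the Toeplitz $+$ Hankel characterisation of Theorem~\ref{thm: T + H}. The key observation is that in condition (3) of Theorem~\ref{main-th1} the relation $XS_E = AS_E - S_E^*A$ with $X = T_\Phi$ reads $AS_E - S_E^*A = T_{z\Phi}$, which is a Toeplitz operator since $z$ is analytic; hence $A$ must be a Toeplitz $+$ Hankel operator, and we may write $A = T_\Xi + H_\Omega$ with $\Xi,\Omega\in\linf$.

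For the forward implication, suppose $\M^{S_E}_{S^*_E}(T_\Phi)$ is similar to a contraction, and take $A = T_\Xi + H_\Omega$ and $\Psi\in\hinf$ as in Theorem~\ref{main-th1}(3). Using $T_\Xi T_z = T_{z\Xi}$, $T_{\bar z}T_\Xi = T_{\bar z\Xi}$, and the Hankel relation $S_E^*H_\Omega = H_\Omega S_E$, one computes $AS_E - S_E^*A = T_{(z-\bar z)\Xi}$; comparing symbols with $T_{z\Phi}$ and dividing through by $z$ yields $(1-\bar z^2)\Xi = \Phi$, i.e.\ $\Phi\in(1-\bar z^2)\linf$, the first asserted condition. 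Since $Je = e$ for $e\in E$, we get $H_\Omega e = P_{\he}(\Omega e)$ and, by a short computation with the flip, $H_\Omega^*e = P_{\he}(\widetilde{\Omega}^*e)$; hence $AP_E = T_\Phi P_E$ translates to $\Xi + \Omega - \Phi\in\zhinf$, and $A^*P_E = \Psi P_E$ translates to $\Xi^* + \widetilde{\Omega}^*\in\hinf+\zhinf$ (using the elementary fact that for $G\in\linf$ one has $P_{\he}(Ge) = \Psi'e$ for all $e\in E$ and some $\Psi'\in\hinf$ exactly when $G\in\hinf+\zhinf$, in which case $\Psi'$ is the analytic part of $G$). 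The first relation, together with $\Phi - \Xi = -\bar z^2\Xi$, forces $\Omega = -\bar z^2\Xi + \zeta$ with $\zeta\in\zhinf$, whence $\widetilde{\Omega}^* = -\bar z^2\widetilde{\Xi}^* + \widetilde{\zeta}^*$ with $\widetilde{\zeta}^*\in\zhinf$; substituting into the second relation reduces it to $\Xi^* - \bar z^2\widetilde{\Xi}^*\in\hinf+\zhinf$. Finally, from $(1-\bar z^2)\Xi = \Phi$ one reads off $\Xi^* = \Phi^*/(1-z^2)$ and $\widetilde{\Xi}^* = \widetilde{\Phi}^*/(1-\bar z^2)$, and since $\bar z^2/(1-\bar z^2) = -1/(1-z^2)$ on $\T$, the quantity $\Xi^* - \bar z^2\widetilde{\Xi}^*$ collapses to $(\Phi^* + \widetilde{\Phi}^*)/(1-z^2)$; membership of this in $\hinf+\zhinf$ is precisely the second asserted condition.

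For the converse, given the two conditions I set $\Xi := \Phi/(1-\bar z^2)\in\linf$, $\Omega := -\bar z^2\Xi\in\linf$, $A := T_\Xi + H_\Omega$, and let $\Psi\in\hinf$ be the analytic part of $(\Phi^*+\widetilde{\Phi}^*)/(1-z^2)$ (which lies in $\hinf$ by hypothesis). Reversing the computations above, $AS_E - S_E^*A = T_{(z-\bar z)\Xi} = T_{z\Phi} = T_\Phi S_E$; $AP_E = T_\Phi P_E$ because $\Xi + \Omega = (1-\bar z^2)\Xi = \Phi$ as $L^\infty$ functions; and $A^*P_E = \Psi P_E$ because $\Xi^* + \widetilde{\Omega}^* = \Xi^* - \bar z^2\widetilde{\Xi}^* = (\Phi^* + \widetilde{\Phi}^*)/(1-z^2)$. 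Theorem~\ref{main-th1}(3)$\Rightarrow$(1) then gives that $\M^{S_E}_{S^*_E}(T_\Phi)$ is similar to a contraction.

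The main obstacle I anticipate is not any single estimate but getting the symbol bookkeeping exactly right: recognising that it is condition (3) of Theorem~\ref{main-th1} (rather than condition (2)) that feeds into the Toeplitz $+$ Hankel characterisation of Theorem~\ref{thm: T + H}; carrying the flip $J$ correctly through the adjoint $H_\Omega^*$; and verifying that the various $\widetilde{(\cdot)}$ and $(\cdot)^*$ operations, via the identity $\bar z^2/(1-\bar z^2) = -1/(1-z^2)$ on $\T$, conspire to turn $\Xi^* - \bar z^2\widetilde{\Xi}^*$ into the clean expression $(\Phi^* + \widetilde{\Phi}^*)/(1-z^2)$. The two auxiliary inputs — uniqueness of Toeplitz symbols and the $\hinf+\zhinf$-membership criterion stated above — are routine.
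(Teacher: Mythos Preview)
Your proposal is correct and follows essentially the same route as the paper's own proof: both invoke condition (3) of Theorem~\ref{main-th1}, recognise $AS_E - S_E^*A = T_{z\Phi}$ as Toeplitz, apply Theorem~\ref{thm: T + H} to write $A = T_\Xi + H_\Omega$, and then unwind the three conditions on $A$ into the two symbol conditions via the identity $\bar z^2/(\bar z^2 - 1) = 1/(1-z^2)$ on $\T$. The only cosmetic difference is that you package the step ``$P_{\he}(Ge) = \Psi' e$ for all $e$ iff $G\in\hinf+\zhinf$'' as a single lemma, whereas the paper carries it out explicitly in each instance; your choice $\Omega = -\bar z^2\Xi$ in the converse coincides with the paper's $\Omega = \frac{\bar z^2}{\bar z^2-1}\Phi$.
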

\begin{proof}
To start with, let us assume $\M^{S_E}_{S^*_E}(T_{\Phi})$ to be similar to a contraction. By part (3) of Theorem \ref{main-th1}, there exist $A \in \clb(\he)$ and $\Psi \in \hinf$ such that
\[
T_\Phi S_E = AS_E - S_E^*A,
\]
with $A P_E = T_\Phi P_E$ and $A^* P_E = \Psi P_E$. Since $S_E = T_z\in\clb(H^2_E(\D))$ and $z$ is analytic, we have $T_\Phi S_E = T_{z\Phi}$, and hence, the first identity yields
\[
T_{z\Phi} = AS_E - S_E^*A.
\]
By virtue of Theorem \ref{thm: T + H}, $A$ is a Toeplitz + Hankel operator, that is, there exist symbols $\Theta, \Omega \in \linf$ such that
\[
A= T_{\Theta} + H_{\Omega}.
\]
Substituting this value of $A$  into the identity $T_{z\Phi} = AS_E - S_E^*A$ and then using the Hankel property $S_E^* H_\Omega = H_\Omega S_E$, we obtain:
\begin{align*}
T_{z\Phi} & = (T_{\Theta} + H_{\Omega})S_E - S_E^*(T_{\Theta} + H_{\Omega})
\\
& = T_{\Theta} S_E - S_E^*T_{\Theta}
\\
& = T_{(z-\bar{z})\Theta}.
\end{align*}
In the above, we have used the relations $T_{\Theta} S_E = T_{z\Theta}$ and $S_E^*T_{\Theta} = T_{\bar{z}\Theta}$ (recall again that $S_E = T_z$). Therefore, by the uniqueness of symbol for Toeplitz operators, we have
\[
z\Phi = (z-\bar{z})\Theta,
\]
which implies that $\Phi = (1 - \bar{z}^2)\Theta\in (1-\bar{z}^2)\linf$, and hence
\[
\Theta = \frac{\Phi}{1-\bar{z}^2}.
\]
Since $A P_E = T_\Phi P_E$, we further obtain that
\begin{align*}
P_{\he} \Phi P_E & = P_{\he} M_\Phi P_E
\\
& = T_{\Phi} P_E
\\
&= A P_E
\\
& = (T_{\Theta} + H_{\Omega})P_E
\\
& = P_{\he} \Theta P_E + P_{\he} \Omega P_E
\\
& = P_{\he}\left(\frac{\Phi}{1-\bar{z}^2} P_E \right) + P_{\he}\left(\Omega P_E\right).
\end{align*}
Here we have used the fact that $H_{\Omega} P_E = P_{\he} M_\Omega J P_E = P_{\he} M_\Omega P_E$. From the above, we have
\[
P_{\he}\left(\left(\Phi - \frac{1}{1-\bar{z}^2}\Phi \right)P_E\right) = P_{\he}\left(\Omega P_E\right),
\]
and hence
\[
P_{\he}\left(\frac{\bar{z}^2}{\bar{z}^2-1}\Phi P_E \right) = P_{\he}\left(\Omega P_E\right).
\]
Set
\[
\Upsilon := \Omega -\frac{\bar{z}^2}{\bar{z}^2-1}\Phi \in \linf.
\]
Then we have that $P_{\he}\left(\Upsilon P_E\right) =0$. This implies
\[
\Upsilon \in \zhinf.
\]
Note that ${H^*_\Omega} = H_{\widetilde\Omega^*}$, and
\[
M_{\Theta}^* = M_{\frac{\Phi^*}{1-z^2}}.
\]
Also, rewrite $\Omega$ as
\[
\Omega = \Upsilon + \frac{\bar{z}^2}{\bar{z}^2-1}\Phi.
\]
Then from the condition $\Psi P_E = A^* P_E$, we get
\begin{align*}
\Psi P_E &=(T_\Theta^*+H_\Omega^*)P_E
\\
&=	P_{\he}\left(M_\Theta^* P_E\right) + P_{\he}\left(M_{\widetilde\Omega^*} P_E\right)
\\
& = P_{\he}\left(\frac{1}{1-z^2} \Phi^* P_E\right) + P_{\he}\left(\widetilde\Upsilon^* P_E + \frac{\bar{z}^2}{\bar{z}^2-1} \widetilde{\Phi}^* P_E\right)
\\
& = P_{\he}\left(\frac{1}{1-z^2} \Phi^* P_E\right) + P_{\he}\left(  \frac{\bar{z}^2}{\bar{z}^2-1} \widetilde{\Phi}^* P_E \right).
\end{align*}
If we set
\[
\Lambda = \frac{1}{1-z^2} \Phi^* + \frac{\bar{z}^2}{\bar{z}^2-1} \widetilde{\Phi}^* -\Psi \in \linf,
\]
then the above identity converts to
\[
P_{\he}\left( \Lambda P_E \right)= 0,
\]
which implies $\Lambda \in \zhinf$, and consequently
\[
\frac{1}{1-z^2} \Phi^* + \frac{1}{1-z^2} \widetilde\Phi^* = \Lambda + \Psi.
\]
This gives us the desired condition that
\[
\Phi^* + \widetilde \Phi^* \in (1-z^2)(\hinf +\zhinf).
\]
Conversely, assume that $\Phi \in (1-\bar{z}^2)\linf$ and
\[
\Phi^* + \widetilde \Phi^* \in (1-z^2)(\hinf +\zhinf).
\]
There exist $\Theta \in \linf$, $\Psi \in \hinf$, and $\Upsilon \in \zhinf$ such that
\[
\Phi = (1-\bar{z}^2)\Theta,
\]
and
\[
\Phi^* + \widetilde \Phi^* = (1-z^2)(\Psi +\Upsilon).
\]
Define
\[
\Omega = \frac{\bar{z}^2}{\bar{z}^2-1}\Phi \in \linf.
\]
Note that $\Theta + \Omega = \Phi$. Set
\[
A = T_\Theta + H_\Omega.
\]
Using again the property of Hankel operators $H_\Omega S_E = S_E^* H_\Omega$, we have
\begin{align*}
AS_E - S_E^*A  & = (T_{\Theta} + H_{\Omega})S_E - S_E^*(T_{\Theta} + H_{\Omega})\\
& = T_{\Theta} S_E - S_E^*T_{\Theta}\\
& = T_{(z-\bar{z})\Theta} \\
& = T_{z\Phi}= T_\Phi S_E.
\end{align*}
Furthermore,
\[
A P_E = (T_{\Theta} + H_{\Omega}) P_E= P_{\he}\left((\Theta + \Omega)P_E\right)=P_{\he}\left(\Phi P_E\right) = T_\Phi P_E,
\]
that is, $A P_E = T_\Phi P_E$. Finally, since
\begin{align*}
A^* P_E & = (T_{\Theta} + H_{\Omega})^*P_E
\\
&= P_{\he}\left((\Theta^* + \widetilde \Omega^*) P_E \right)
\\
& = P_{\he}\left(\frac{1}{1-z^2} (\Phi^* + \widetilde\Phi^*) P_E \right)
\\
& = P_{\he}\left((\Psi +\Upsilon)P_E\right)
\\
& = P_{\he} \Psi P_E
\\
& = \Psi P_E,
\end{align*}
by Theorem \ref{main-th1}, we conclude that $\M^{S_E}_{S^*_E}(T_{\Phi})$ is similar to a contraction. Our proof is therefore complete.
\end{proof}

In a later section, we will see an application of the $E=\C$ case of this result.

%further apply this result to the case where $E$ is the complex plane.

\section{Hankel operators}\label{sec: HO and sim}

In this section, we will characterize symbols $\Phi$ that make the Foguel-type operator $\M^{S_E}_{S_E^*}(H_\Phi)$ similar to a contraction. The goal is, of course, to verify the applicability of Theorem \ref{main-th1}, as we did for Toeplitz operators in the previous section. We need the concept of differential operators.

For each $n \in \Z_+$ and $e \in E$, define
\[
D_E (z^n e) = \begin{cases}
nz^{n-1} e & \mbox{if } n \geq 1 \\
0 & \mbox{if } n = 0.
\end{cases}
\]
The \textit{differential operator} $D_E$ on the space of $E$-valued polynomials extends the action of $D_E$ as defined on $E$-valued monomials. Therefore, for all $\mathbf{p}=\sum_{n=0}^{M}z^ne_n\in \he$, we have
\[
D_E(\mathbf{p})=\mathbf{p}^\prime,
\]
where $\mathbf{p}^\prime = \sum_{n=1}^{M}nz^{n-1}e_n$. It is worth clarifying that, by $A\in\clb(\he)$, where $A$ is originally defined on polynomials, we mean that $A$ admits a unique extension to a bounded linear operator on the entire space $\he$. With all these preparations, we are ready for characterizations of Foguel-type operators $\M^{S_E}_{S^*_E}(H_{\Phi})$ that are similar to contractions:

\begin{theorem}\label{Cor-th2}
Let $\Phi \in \linf$. Then $\M^{S_E}_{S^*_E}(H_{\Phi})$ is similar to a contraction if and only if
\[
H_\Phi D_E \in \clb(\he),
\]
and there exists $\Psi \in \hinf$ such that
\[
(H_\Phi D_E S_E)^*P_E = \Psi P_E.
\]
\end{theorem}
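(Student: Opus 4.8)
The plan is to invoke the general criterion in Theorem \ref{main-th1}, specialized to $X = H_\Phi$, and then massage the resulting operator identities into the two stated conditions using the Hankel commutation relation $S_E^* H_\Phi = H_\Phi S_E$. Starting from $\M^{S_E}_{S^*_E}(H_{\Phi})$ being similar to a contraction, part (2) of Theorem \ref{main-th1} gives $A \in \clb(\he)$ and $\Psi \in \hinf$ with $A^* P_E = \Psi P_E$ and $H_\Phi = A - S_E^* A S_E^*$. The guiding idea is that the recursion $H_\Phi = A - S_E^* A S_E^*$, together with $S_E^{*n} \to 0$ strongly, should force $A$ to be expressible in terms of $H_\Phi$ and the backward shift, and that the ``telescoping'' of $H_\Phi$ against powers of $S_E^*$ is exactly what the differential operator $D_E$ encodes. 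Concretely, iterating gives $A = \sum_{k \geq 0} S_E^{*k} H_\Phi S_E^{*k}$ in the strong (or weak) operator topology on polynomials, and on a monomial $z^n e$ one computes $\sum_{k=0}^{n} S_E^{*k} H_\Phi S_E^{*k}(z^n e)$, where the number of surviving terms is governed by $n$ — this is where the factor $D_E$ (i.e. multiplication by $n$ after a shift) will materialize. The first main step is therefore to identify $A$, or rather $A S_E$ restricted to polynomials, with an explicit expression built from $H_\Phi D_E$, and to check that the boundedness of $A$ is equivalent to $H_\Phi D_E \in \clb(\he)$.

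The second main step is to extract the condition $(H_\Phi D_E S_E)^* P_E = \Psi P_E$ from $A^* P_E = \Psi P_E$. Once $A$ (on polynomials) is pinned down as essentially $H_\Phi D_E$ composed with shifts, one transposes the relevant identity and restricts to the constants $E \subseteq \he$; the point is that $A^* P_E$ and $(H_\Phi D_E S_E)^* P_E$ should differ only by terms annihilating $E$ on the right, or should agree outright after accounting for $S_E^* P_E = 0$. The converse direction runs the same computations backwards: assuming $H_\Phi D_E \in \clb(\he)$, one \emph{defines} $A$ by the series $\sum_{k\geq 0} S_E^{*k} H_\Phi S_E^{*k}$ (now known to converge to a bounded operator, using the hypothesis to control it), verifies $H_\Phi = A - S_E^* A S_E^*$ directly, and uses the hypothesis $(H_\Phi D_E S_E)^* P_E = \Psi P_E$ with $\Psi \in \hinf$ to confirm $A^* P_E = \Psi P_E$; then Theorem \ref{main-th1}(2) closes the loop.

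The part I expect to be the genuine obstacle is making the identification of $A$ with $H_\Phi D_E$ rigorous on the level of bounded operators rather than just formally on polynomials: the series $\sum_k S_E^{*k} H_\Phi S_E^{*k}$ need not converge in norm, and one must argue carefully — presumably via a weak-operator-topology Banach–Alaoglu argument paralleling the proof of Theorem \ref{main-th1} — that the operator $A$ produced there, when evaluated on polynomials, really is the closable operator associated with $H_\Phi D_E$ (so that $H_\Phi D_E \in \clb(\he)$ is both necessary and sufficient). The bookkeeping in the monomial computation, keeping track of exactly which summands survive $S_E^{*k}$ and how the multiplicities assemble into $D_E$, is where a sign or index slip would be fatal, so that calculation must be done with care; but structurally it is routine once the right expression is guessed. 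A secondary subtlety is checking that $H_\Phi D_E S_E$ is automatically bounded once $H_\Phi D_E$ is (this is clear since $S_E$ is an isometry, so $H_\Phi D_E S_E$ extends boundedly iff $H_\Phi D_E$ does on the range of $S_E$, which one reconciles with the monomial $z^0 e$ being killed by $D_E$ anyway), so the two displayed conditions in the statement are the clean minimal hypotheses.
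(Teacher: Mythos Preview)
Your proposal is correct and follows essentially the same route as the paper, with only cosmetic differences in organization. The paper invokes part (3) of Theorem \ref{main-th1} (the identity $H_\Phi S_E = A S_E - S_E^* A$ together with $A P_E = H_\Phi P_E$) and then proves by induction on $n$ that $A(z^n e) = H_\Phi D_E S_E(z^n e)$, whereas you start from part (2) (the identity $H_\Phi = A - S_E^* A S_E^*$), iterate to obtain the finite sum $A(z^n e) = \sum_{k=0}^{n} S_E^{*k} H_\Phi S_E^{*k}(z^n e)$, and then use the Hankel relation $S_E^{*k} H_\Phi = H_\Phi S_E^k$ to collapse this to $(n+1) H_\Phi(z^n e) = H_\Phi D_E S_E(z^n e)$. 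Both arguments land on the same identification $A = H_\Phi D_E S_E$ on polynomials, from which the two displayed conditions and the converse follow exactly as you outline; the ``obstacle'' you flag is handled simply by noting that once $A = H_\Phi D_E S_E$ on the dense set of polynomials and $A$ is bounded, the extension is automatic, and $H_\Phi D_E = (H_\Phi D_E S_E) S_E^*$ on polynomials gives the boundedness of $H_\Phi D_E$ itself.
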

\begin{proof}
Assume that $\M^{S_E}_{S^*_E}(H_{\Phi})$ is similar to a contraction. Then by Theorem \ref{main-th1}, there exist $A \in \clb(\he)$ and $\Psi \in \hinf$ such that
\[
H_{\Phi}S_E = AS_E - S_E^*A,
\]
with $A P_E = H_{\Phi} P_E$ and $A^* P_E = \Psi P_E$. As $D_E S_E P_E = P_E$, it follows that
\[
A P_E = H_\Phi P_E = H_\Phi D_E S_E P_E.
\]
To apply mathematical induction, let us fix a natural number $m$ and suppose that
\[
A(z^m e) = H_\Phi D_ES_E(z^m e),
\]
for all $e\in E$. Fix $e \in E$. Again the condition $H_{\Phi}S_E = AS_E - S_E^*A$ gives
\begin{align*}
H_{\Phi}(z^{m+1}e) & = H_{\Phi} S_E (z^{m}e)
\\
& = (AS_E - S_E^*A)(z^{m}e)
\\
& = A(z^{m+1} e)- S_E^*A(z^m e)
\\
& = A(z^{m+1} e)- S_E^*H_\Phi D_ES_E(z^m e),
\end{align*}
as $A(z^m e) = H_\Phi D_ES_E(z^m e)$. Using the property of Hankel operators, namely $H_\Phi S_E = S_E^* H_\Phi$, it follows that
\begin{align*}
H_{\Phi}(z^{m+1}e) & = A(z^{m+1} e)- H_\Phi S_E D_E(z^{m+1} e) = A(z^{m+1} e)- (m+1)H_{\Phi}(z^{m+1}e),
\end{align*}
and hence
\[
A(z^{m+1} e) =(m+2)H_{\Phi}(z^{m+1}e)= H_\Phi D_E S_E(z^{m+1} e).
\]
By the principle of mathematical induction, we finally conclude that
\[
A(z^n e) = H_\Phi D_ES_E(z^n e),
\]
for all $n\geq 1$ and $e\in E$. Therefore,
\[
A(\mathbf{p})= H_\Phi D_ES_E(\mathbf{p}),
\]
for all $E$-valued polynomials $\mathbf{p}\in \he$. Since $A\in\clb(\he)$, the linear operator $H_\Phi D_ES_E$ between the space of $E$-valued polynomials and $\he$ is bounded. Recall that these polynomials constitute a dense subspace of $\he$, and thus $H_\Phi D_ES_E$ admits a unique bounded linear extension to an element of $\clb(\he)$, which we also call
\[
H_\Phi D_E S_E.
\]
In other words,
\[
A= H_\Phi D_E S_E,
\]
on all of $\he$, and hence, the condition $A^* P_E = \Psi P_E$ yields
\[
(H_\Phi D_E S_E)^* P_E = \Psi P_E.
\]
We now need to show that $H_\Phi D_E$, defined originally on $E$-valued polynomials, can also be extended uniquely to an element in $\clb(\he)$. To see this, for an $E$-valued polynomial $\mathbf{p}\in \he$, we compute
\[
\begin{split}
(H_\Phi D_ES_E)S_E^*(\mathbf{p}) & = H_\Phi D_ES_ES_E^*(\mathbf{p}) = H_\Phi D_E(I-P_E)(\mathbf{p}) = H_\Phi D_E(\mathbf{p}).
\end{split}
\]
Since both $H_\Phi D_E S_E$ and $S_E^*$ are bounded on $\he$, it follows that $H_\Phi D_E$ extends to a bounded linear operator on $\he$.

\noindent Conversely, assume that $H_\Phi D_E \in\clb(\he)$ and there is a function $\Psi \in \hinf$ such that
\[
(H_\Phi D_E S_E)^* P_E = \Psi P_E.
\]
Then we set
\[
A:= H_\Phi D_E S_E \in \clb(\he).
\]
Clearly,
\[
A^* P_E = (H_\Phi D_E S_E)^* P_E = \Psi P_E.
\]
On the other hand, since $D_E S_E e = e$ for all $e \in E$, it follows that
\[
A P_E = H_\Phi D_E S_E P_E = H_\Phi P_E.
\]
Finally, for an $E$-valued polynomial $\mathbf{p}\in \he$, we compute
\begin{align*}
AS_E(\mathbf{p}) - S_E^*A(\mathbf{p})  &= (H_\Phi D_E S_E)(z\mathbf{p}) - S_E^*(H_\Phi D_E S_E)(\mathbf{p})
\\
& = H_\Phi(2z\mathbf{p}+z^2\mathbf{p}^\prime)- S_E^*H_\Phi(\mathbf{p}+z\mathbf{p}^\prime)
\\
& = H_\Phi(2z\mathbf{p} + z^2\mathbf{p}^\prime) - H_\Phi S_E(\mathbf{p}+z\mathbf{p}^\prime)
\\
& = H_\Phi(z\mathbf{p})
\\
& = H_\Phi S_E(\mathbf{p}).
\end{align*}
Using the boundedness of $A$ and the density of $E$-valued polynomials in $\he$, we conclude that
\[
AS_E(f) - S_E^*A(f)=H_\Phi S_E(f)
\]
for all $f\in\he$. By Theorem \ref{main-th1}, we now conclude that $\M^{S_E}_{S^*_E}(H_{\Phi})$ is similar to a contraction. This completes our proof.
\end{proof}

We will later discuss the $E=\C$ case of the above result in the form of Theorem \ref{s6-th1}, as well as some of its applications.

Let us also remark that differential operators in the context of the similarity problem for operators of the form $\M_{S_E}^{S^*_E}(X)$ have appeared in \cite{Davidson-Paulsen} as well.

\section{Toeplitz and Hankel operators}\label{sec: T & H}

In the previous two sections, we have answered the similarity problem for $\M^{S_E}_{S_E^*}(X)$, where $X$ is a Toeplitz or Hankel operator. In the present section, we provide complete answers for the similarity problem for $M^Y_Z(X)$ when
\[
Y = Z = S_E,
\]
or
\[
Y = Z = S_E^*,
\]
and $X$ is a Toeplitz or a Hankel operator. We will also completely analyze the remaining case $\M_{S_E}^{S_E^*}(X)$, where $X$ is a Toeplitz operator.

We begin by highlighting a reduction result for the first four classes of operators. By the Foia\c{s}-Williams criterion \eqref{Similar-intro-2}, we have the following fact:
\[
\M^{S_E}_{S_E}(X) = \begin{bmatrix}
S_E & X\\
0 & S_E
\end{bmatrix} : \he \oplus \he \to \he \oplus \he
\]
is similar to a contraction if and only if there exists $A\in\clb(\he)$ such that
\begin{equation}\label{s3-eq1}
S_E A-AS_E=X.
\end{equation}
Similarly,
\[
\M^{S_E^*}_{S_E^*}(X) = \begin{bmatrix}
S_E^* & X\\
0 & S_E^*
\end{bmatrix} : \he \oplus \he \to \he \oplus \he
\]
is similar to a contraction if and only if there exists $A^\prime\in\clb(\he)$ such that
\begin{equation}\label{s3-eq2}
S_E^* A^\prime-A^\prime S_E^*=X.
\end{equation}
Clearly, $A^\prime$ satisfies \eqref{s3-eq2} if and only if $A={-A^\prime}^*$ satisfies \eqref{s3-eq1}, with $X$ replaced by $X^*$. Hence, $\M^{S_E^*}_{S_E^*}(X)$ is similar to contraction if and only if $\M^{S_E}_{S_E}(X^*)$ is. Now, considering our choices $X=T_\Phi$ and $H_\Phi$, we note that both $T_\Phi^*$ and $H_\Phi^*$ are again Toeplitz and Hankel operators, respectively.

Therefore, it suffices to focus on the operator $\M^{S_E}_{S_E}(X)$, where $X$ is a Toeplitz or a Hankel operator. We show below that the first class of operators is trivial.

\begin{theorem}\label{s3-th1}
Let $\Phi \in \linf$. Then $\M^{S_E}_{S_E}(T_\Phi)$ is similar to a contraction if and only if
\[
\Phi = 0.
\]
\end{theorem}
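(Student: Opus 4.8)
The plan is to reduce the problem to the Foia\c{s}--Williams equation \eqref{s3-eq1} and then play it against the defining property of Toeplitz operators recorded before Theorem \ref{thm: T + H}, namely $S_E^* T_\Phi S_E = T_\Phi$. The ``only if'' direction is the whole content: if $\Phi = 0$ then $A = 0$ trivially satisfies $S_E A - A S_E = T_0$, so $\M^{S_E}_{S_E}(T_0)$ is a contraction.

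So assume $\M^{S_E}_{S_E}(T_\Phi)$ is similar to a contraction. By \eqref{s3-eq1} there is $A \in \clb(\he)$ with $S_E A - A S_E = T_\Phi$. First I would left-multiply by $S_E^*$ and use $S_E^* S_E = I$ to get $A - S_E^* A S_E = S_E^* T_\Phi$. The key step is to iterate this identity by conjugating with powers of $S_E$: for each $n \geq 0$,
\[
S_E^{*n} A S_E^n - S_E^{*(n+1)} A S_E^{n+1} = S_E^{*n}\bigl(A - S_E^* A S_E\bigr) S_E^n = S_E^{*(n+1)} T_\Phi S_E^n = S_E^*\bigl(S_E^{*n} T_\Phi S_E^n\bigr) = S_E^* T_\Phi,
\]
the last equality being the Toeplitz invariance $S_E^{*n} T_\Phi S_E^n = T_\Phi$. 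Telescoping these equalities from $n = 0$ to $n = N - 1$ yields $A - S_E^{*N} A S_E^N = N\, S_E^* T_\Phi$ for every $N \geq 1$. Since $\|A - S_E^{*N} A S_E^N\| \leq 2\|A\|$ uniformly in $N$, we are forced to conclude $S_E^* T_\Phi = 0$. Right-multiplying this by $S_E$ and invoking $S_E^* T_\Phi S_E = T_\Phi$ once more gives $T_\Phi = 0$, hence $\Phi = 0$ by uniqueness of the symbol of a Toeplitz operator.

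There is no serious obstacle here; the argument is short and self-contained modulo results already quoted. The only point that needs a little care is the bookkeeping in the iteration/telescoping step together with the repeated appeal to $S_E^{*n} T_\Phi S_E^n = T_\Phi$. The single idea driving the proof is simply the observation that the commutator relation makes the auxiliary operator $S_E^* T_\Phi$ contribute a term growing linearly in $N$, which is incompatible with $A$ being bounded unless $S_E^* T_\Phi = 0$, after which the Toeplitz invariance collapses everything to $T_\Phi = 0$.
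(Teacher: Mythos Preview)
Your proof is correct and follows essentially the same route as the paper: multiply the Foia\c{s}--Williams relation on the left by $S_E^*$, use the Toeplitz invariance $S_E^{*n}T_\Phi S_E^n = T_\Phi$ to see that the telescoping sum $A - S_E^{*N}A S_E^N$ equals $N\,S_E^*T_\Phi$, and conclude from the uniform bound $2\|A\|$ that $S_E^*T_\Phi = 0$, hence $T_\Phi = 0$. The only cosmetic difference is that the paper writes $S_E^*T_\Phi = T_{\bar z\Phi}$ and invokes $\|T_{\bar z\Phi}\| = \|\Phi\|_\infty$ to get $\Phi = 0$ directly, whereas you right-multiply by $S_E$ and appeal to symbol uniqueness.
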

\begin{proof}
If $\Phi = 0$, then $\M^{S_E}_{S_E}(T_\Phi)$ is itself a contraction. Let us now assume that $\M^{S_E}_{S_E}(T_\Phi)$ is similar to a contraction. By \eqref{s3-eq1}, there exists $A\in \clb(\he)$ such that
\[
S_E A -A S_E = T_\Phi.
\]
Multiplying both sides on the left by $S_E^*$, we obtain
\[
A - S_E^* A S_E = T_{\bar{z}\Phi}.
\]
By the property of Toeplitz operators, we know
\[
{S_E^*}^{n-1} T_{\bar{z}\Phi} S_E^{n-1} = T_{\bar{z}\Phi},
\]
which in turn gives
\[
\begin{split}
{S_E^*}^{n-1}A S_E^{n-1} - S_E^{*n} A S_E^n & = {S_E^*}^{n-1}(A -  S_E^* A S_E) S_E^{n-1}
\\
& = {S_E^*}^{n-1} T_{\bar{z}\Phi} S_E^{n-1}
\\
& = T_{\bar{z}\Phi},
\end{split}
\]
for all $n\geq 1$. Fix $n \geq 1$ and write
\[
A - S_E^{*^n}A S_E^n =\sum_{k=1}^{n}\left(S_E^{*^{k-1}}A S_E^{k-1} -S_E^{*^k}A S_E^k\right).
\]
It now follows that
\[
A - S_E^{*^n}A S_E^n = nT_{\bar{z}\Phi},
\]
and hence
\[
n\|\Phi\|_{\infty} = n\|T_{\bar{z}\Phi}\| =\|A - S_E^{*^n}A S_E^n\| \leq 2\|A\|,
\]
for all $n\geq 1$. Consequently, $\Phi = 0$.
\end{proof}

In view of the above result and the observation in the beginning of this section, we have the following: Let $\Phi \in \linf$. Then $\M^{S_E^*}_{S_E^*}(T_\Phi)$ is similar to a contraction if and only if
\[
\Phi = 0.
\]

Before proceeding, we need to establish some basic identities relating Toeplitz and Hankel operators. For the purpose of completeness, we prefer to provide full proofs of these identities, even though these are well known in the scalar case and probably familiar to specialists in the general vector-valued case. Recall that, for each $\Phi \in \linf$, we define $\widetilde \Phi \in \linf$ by
\[
\widetilde{\Phi}(z) = \Phi(\bar{z}).
\]

\begin{lemma}\label{TH-id}
Let $\Phi, \Psi \in \linf$. The following two identities hold:
\begin{enumerate}
\item $T_{\Phi\Psi}- T_\Phi T_\Psi = H_{\bar{z}\Phi}H_{\bar{z}\widetilde{\Psi}}$.
\item $H_{\bar{z}\widetilde{\Phi} \widetilde{\Psi}} = H_{\bar{z} \widetilde{\Phi}}T_\Psi + T_{\widetilde{\Phi}}H_{\bar{z}\widetilde{\Psi}}. $
\end{enumerate}
\end{lemma}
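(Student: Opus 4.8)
The plan is to prove both identities by direct computation using the defining relations $T_\Phi = P_{\he}M_\Phi|_{\he}$ and $H_\Phi = P_{\he}M_\Phi J|_{\he}$, where the essential points will be (i) the interaction of $J$ with multiplication operators, namely $JM_\Psi = M_{\widetilde\Psi}J$ for any $\Psi \in \linf$, and (ii) the orthogonal decomposition $L^2_E(\T) = \he \oplus \overline{z H^2_E(\D)}$, which I will write as $I = P_{\he} + P_{(\he)^{\perp}}$, together with the fact that $J$ interchanges $\he$ with a related subspace so that $J$, $M_{\bar z}$, and the projections satisfy $JM_{\bar z}P_{(\he)^{\perp}} = P_{\he}JM_{\bar z}$ type relations. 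Concretely, the standard trick is the identity $P_{(\he)^{\perp}} = M_z J P_{\he} J M_{\bar z}$ (equivalently $J P_{(\he)^{\perp}} J = M_{\bar z}P_{\he}M_z$), which converts a stray complementary projection into an expression involving $H$-type operators with the extra $\bar z$ that appears throughout the statement.

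For part (1), I would start from $T_{\Phi\Psi} - T_\Phi T_\Psi = P_{\he}M_{\Phi}M_{\Psi}|_{\he} - P_{\he}M_\Phi P_{\he} M_\Psi|_{\he} = P_{\he}M_\Phi\,(I - P_{\he})\,M_\Psi|_{\he} = P_{\he}M_\Phi P_{(\he)^{\perp}} M_\Psi|_{\he}$. Now substitute $P_{(\he)^{\perp}} = M_z J P_{\he} J M_{\bar z}$ and push the $M_z$ and $M_{\bar z}$ through using $J M_{\bar z} = M_{z} J$ style commutations (being careful with the tilde), rewriting the middle in the form $(P_{\he}M_{\bar z\Phi}J|_{\he})(P_{\he}M_{\bar z\widetilde\Psi}J|_{\he}) = H_{\bar z\Phi}H_{\bar z\widetilde\Psi}$; one checks $JM_{\Phi}M_zJ = M_{\widetilde\Phi}M_{\bar z}$ and groups factors so that exactly the symbols $\bar z\Phi$ and $\bar z\widetilde\Psi$ emerge. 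For part (2), the cleanest route is again to expand each side against $L^2_E(\T)$: write $H_{\bar z\widetilde\Phi}T_\Psi = P_{\he}M_{\bar z\widetilde\Phi}J P_{\he}M_\Psi|_{\he}$ and $T_{\widetilde\Phi}H_{\bar z\widetilde\Psi} = P_{\he}M_{\widetilde\Phi}P_{\he}M_{\bar z\widetilde\Psi}J|_{\he}$, use $J P_{\he}M_\Psi = JM_\Psi J \cdot JP_{\he} = M_{\widetilde\Psi}JP_{\he}$ (so that $J$ absorbs into the symbol) together with $I = P_{\he} + P_{(\he)^{\perp}}$ to split $P_{\he}M_{\widetilde\Phi}M_{\bar z\widetilde\Psi}J|_{\he} = H_{\bar z\widetilde\Phi\widetilde\Psi}$, and observe that the two ``$P_{(\he)^{\perp}}$ error terms'' coming from the two products cancel, again via the complementary-projection identity. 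Summing gives exactly $H_{\bar z\widetilde\Phi\widetilde\Psi}$.

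The main obstacle — and the only place that requires genuine care rather than bookkeeping — is keeping the tildes and the $\bar z$ vs.\ $z$ factors straight when commuting $J$ past the various $M_\Phi$, $M_z$, and the projections; the relation $J P_{\he} J = M_{\bar z}P_{(\he)^{\perp}}M_z$ (or its rearrangement $M_z J P_{\he}J M_{\bar z} = P_{(\he)^{\perp}}$) must be applied in exactly the right spot. It is worth verifying this auxiliary projection identity first as a short preliminary computation: since $\he = \overline{\mathrm{span}}\{z^n e : n\geq 0, e\in E\}$ and $(\he)^{\perp} = \overline{\mathrm{span}}\{\bar z^n e : n\geq 1, e\in E\}$ inside $L^2_E(\T)$, one checks the action of both sides on the orthonormal-type basis $\{z^k e\}_{k\in\Z}$ directly. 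With that identity in hand, parts (1) and (2) are each a two- or three-line manipulation. I would also note that, once (1) is established, (2) admits an alternative derivation by applying $J$ and passing to adjoints in a suitable rearrangement of (1), but the direct expansion is more transparent and less error-prone, so that is the route I would write up.
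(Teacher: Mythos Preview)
Your approach is correct and essentially identical to the paper's: verify the auxiliary projection identity on the basis $\{z^k e\}_{k\in\Z}$, then insert it into $P_{\he}M_\Phi(I-P_{\he})M_\Psi|_{\he}$ for part~(1) and into the direct expansion for part~(2). One small correction: the identity reads $JP_{\he}J = M_z(I-P_{\he})M_{\bar z}$ (equivalently $P_{(\he)^\perp} = M_{\bar z}JP_{\he}JM_z$), with your $z$ and $\bar z$ interchanged --- but your planned basis check would catch this immediately.
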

\begin{proof}
Let us first note that
\[
JP_{\he}J = M_z(I-P_{\he})M_{\bar{z}},
\]
which can be easily verified on the standard orthonormal basis of $\Le$, and hence is true for any arbitrary element in $\Le$. Using this, we have for any $f\in \he$:
\begin{align*}
T_{\Phi\Psi}(f)- T_\Phi T_\Psi(f) & = P_{\he}(\Phi \Psi f)-P_{\he}(\Phi P_{\he}(\Psi f))
\\
& = P_{\he}\left(\Phi(I-P_{\he})\Psi f\right)
\\
& = P_{\he}\left(\Phi M_{\bar{z}} JP_{\he}J M_z\Psi f\right)
\\
& = P_{\he}\left(\bar{z}\Phi JP_{\he}\left(\bar{z}\Psi(\bar{z})Jf\right)\right)
\\
& = P_{\he}\left(\bar{z}\Phi J H_{\bar{z}\Psi(\bar{z})}(f)\right)
\\
& = H_{\bar{z}\Phi}H_{\bar{z}\widetilde{\Psi}}(f).
\end{align*}
This proves the first identity. For the second identity, we compute, for each $f\in \he$,
\begin{align*}
H_{\bar{z}\widetilde{\Phi} \widetilde{\Psi}} (f)- H_{\bar{z}\widetilde{\Phi}} T_\Psi(f) & = P_{\he}\left(\bar{z} \widetilde{\Phi}\widetilde{\Psi} Jf \right) - P_{\he}\left(\bar{z} \widetilde{\Phi} JP_{\he}(\Psi f)\right)
\\
& = P_{\he}\left(\bar{z} \widetilde{\Phi} J(\Psi f)\right) - P_{\he}\left(\bar{z} \widetilde{\Phi} JP_{\he}(\Psi f)\right)
\\
& = P_{\he}\left(\bar{z} \widetilde{\Phi} J(I-P_{\he})(\Psi f)\right)
\\
& = P_{\he}\left(\bar{z} \widetilde{\Phi} J M_{\bar{z}} JP_{\he}J M_z(\Psi f)\right)
\\
&= P_{\he}\left(\bar{z} \widetilde{\Phi} M_zJ^2P_{\he}(\bar{z} \widetilde{\Psi} Jf)\right)
\\
&= P_{\he}\left(\widetilde{\Phi} P_{\he}(\bar{z} \widetilde{\Psi} Jf)\right)
\\
& = P_{\he}\left(\widetilde{\Phi} H_{\bar{z} \widetilde{\Psi} }(f)\right)
\\
& = T_{\widetilde{\Phi}}H_{\bar{z} \widetilde{\Psi}}(f).
\end{align*}
This completes the proof of the lemma.
\end{proof}

With the above information in hand, we proceed to solve the similarity to contraction problem for $\M_{S_E}^{S_E}(H_\Phi)$.

\begin{theorem}\label{s3-th2}
Let $\Phi \in \linf$. Then $\M_{S_E}^{S_E}(H_\Phi)$ is similar to a contraction if and only if
\[
\Phi \in (z-\bar{z})\linf + \zhinf .
\]
\end{theorem}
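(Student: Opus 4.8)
The plan is to run everything through the reduction \eqref{s3-eq1}: $\M_{S_E}^{S_E}(H_\Phi)$ is similar to a contraction iff there is some $A\in\clb(\he)$ with $S_EA-AS_E=H_\Phi$, so the question becomes for which $\Phi$ this operator equation is solvable. I would attack it by translating the ``commutator with $S_E$'' operation into symbol‑level operations. The ingredients I would assemble first are the routine identities $H_\Psi S_E=H_{\bar z\Psi}=S_E^*H_\Psi$, $T_\Psi S_E=T_{z\Psi}$, $S_E^*T_\Psi S_E=T_\Psi$, $S_ES_E^*=I-P_E$, together with the two short Fourier‑coefficient computations
\[
S_ET_\Psi=T_{z\Psi}-P_EH_{\bar z\widetilde\Psi},\qquad S_EH_\Psi=H_{z\Psi}-P_ET_{\bar z\widetilde\Psi},
\]
which give $S_ET_\Psi-T_\Psi S_E=-P_EH_{\bar z\widetilde\Psi}$ (an operator with range contained in $E$) and $S_EH_\Psi-H_\Psi S_E=H_{(z-\bar z)\Psi}-P_ET_{\bar z\widetilde\Psi}$. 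I would also use that a Hankel operator $H_\Xi$ has range contained in $E$ precisely when $\widehat\Xi(k)=0$ for all $k\ge1$, i.e.\ when $\Xi\in\clb(E)+\zhinf$; that every constant $C\in\clb(E)$ lies in $(z-\bar z)\linf+\zhinf$ (since $C=(z-\bar z)(\bar zC)+\bar z^2C$); and that $H_\Gamma=0$ whenever $\Gamma\in\zhinf$, such $\Gamma$ having only strictly negative Fourier modes.

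For necessity, from $S_EA-AS_E=H_\Phi$ I would left‑multiply by $S_E^*$ to get $A-S_E^*AS_E=S_E^*H_\Phi=H_{\bar z\Phi}$, so that $S_E^*AS_E-A$ is a Hankel operator, and Theorem \ref{thm: T + H} then forces $A=T_\Theta+H_\Omega$ for some $\Theta,\Omega\in\linf$. Substituting and applying the identities above,
\[
H_\Phi=S_EA-AS_E=H_{(z-\bar z)\Omega}-P_E\big(H_{\bar z\widetilde\Theta}+T_{\bar z\widetilde\Omega}\big),
\]
so $H_{\Phi-(z-\bar z)\Omega}$ has range contained in $E$; hence $\Phi-(z-\bar z)\Omega\in\clb(E)+\zhinf\subseteq(z-\bar z)\linf+\zhinf$, and therefore $\Phi\in(z-\bar z)\linf+\zhinf$.

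For sufficiency, I would write $\Phi=(z-\bar z)\Omega+\Gamma$ with $\Omega\in\linf$ and $\Gamma\in\zhinf$, note that $H_\Phi=H_{(z-\bar z)\Omega}$, and take the explicit candidate $A=H_\Omega-S_E^*T_{\bar z\widetilde\Omega}\in\clb(\he)$. Here $S_EH_\Omega-H_\Omega S_E=H_{(z-\bar z)\Omega}-P_ET_{\bar z\widetilde\Omega}$, while from $S_E^*T_\xi S_E=T_\xi$ and $S_ES_E^*=I-P_E$ one computes
\[
S_E(-S_E^*T_\xi)-(-S_E^*T_\xi)S_E=-T_\xi+P_ET_\xi+T_\xi=P_ET_\xi\qquad(\xi\in\linf);
\]
taking $\xi=\bar z\widetilde\Omega$ and adding the two gives $S_EA-AS_E=H_{(z-\bar z)\Omega}=H_\Phi$, and \eqref{s3-eq1} concludes.

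The step I expect to be the real obstacle is producing the bounded solution $A$ in the sufficiency direction. The obvious guess $A=H_\Omega$ only solves $S_EA-AS_E=H_\Phi$ modulo the correction $P_ET_{\bar z\widetilde\Omega}$, and the difficulty is to realize this correction as $S_EB-BS_E$ with $B$ \emph{bounded}: attempting a Toeplitz $B$ with a prescribed co‑analytic symbol fails because the Riesz projection is unbounded on $\linf$. The identity $S_E(-S_E^*T_\xi)-(-S_E^*T_\xi)S_E=P_ET_\xi$ is what resolves it, since $\bar z\widetilde\Omega\in\linf$ makes $B=-S_E^*T_{\bar z\widetilde\Omega}$ manifestly bounded. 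On the necessity side the essential leverage is Theorem \ref{thm: T + H}, which pins down the Toeplitz $+$ Hankel shape of $A$; after that it is only bookkeeping of which summands of $S_EA-AS_E$ land in $E$.
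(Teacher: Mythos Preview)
Your proof is correct and follows essentially the same route as the paper: for necessity you invoke Theorem \ref{thm: T + H} to force $A=T_\Theta+H_\Omega$ and read off the symbol condition, and for sufficiency your candidate $A=H_\Omega-S_E^*T_{\bar z\widetilde\Omega}$ is literally the paper's $A=T_\Theta+H_\Omega$ with $\Theta=-\bar z^2\widetilde\Omega$ (since $S_E^*T_\xi=T_{\bar z\xi}$), and your preliminary identities are the special cases of Lemma \ref{TH-id} with one factor equal to $z$. The only cosmetic difference is that you compute $S_EA-AS_E$ directly rather than via the $S_E^*$-multiplied equation, which lets you bypass the separate verification that $P_EH_{z\Omega}=P_ET_{\bar z\widetilde\Omega}$.
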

\begin{proof}
Let $\M_{S_E}^{S_E}(H_\Phi)$ be similar to a contraction. In view of $(\ref{s3-eq1})$, there exists $A\in \clb(\he)$ such that
\begin{equation}\label{h-eq-1}
S_E A -A S_E = H_\Phi,
\end{equation}
and hence
\[
A -S_E^*A S_E = S_E^*H_\Phi=H_{\bar{z}\Phi}.
\]
By Theorem \ref{thm: T + H}, we know that $A$ is a Toeplitz + Hankel operator, that is, there exist symbols $\Theta, \Omega \in \linf$ such that
\[
A = T_{\Theta} + H_{\Omega}.
\]
Substituting this value of $A$ in $A -S_E^*A S_E=H_{\bar{z}\Phi}$ gives
\[
H_\Omega - S_E^* H_\Omega S_E = H_{\bar{z}\Phi}.
\]
Applying the Hankel property $S_E^* H_\Omega = H_\Omega S_E$ to the above, we obtain
\[
H_{(1-\bar{z}^2)\Omega} = H_{\bar{z}\Phi},
\]
which implies
\[
(1-\bar{z}^2)\Omega = \bar{z}\Phi + \widehat{\Psi},
\]
for some
\[
\widehat{\Psi} = \sum_{n=1}^{\infty}\bar{z}^n\Psi_{n-1} \in \zhinf.
\]
This yields
\[
(z-\bar{z})\Omega = \Phi + z\widehat{\Psi},
\]
which can be rewritten as
\[
(z-\bar{z})\Omega = \Phi + (z-\bar{z})\widehat{\Psi}+\bar{z}\widehat{\Psi}.
\]
As a result,
\[
\Phi = (z-\bar{z}) (\Omega-\widehat{\Psi}) - \bar{z}\widehat{\Psi}\in (z-\bar{z})\linf + \zhinf.
\]
Conversely, let $\Phi \in (z-\bar{z})\linf + \zhinf$. In other words, there exist $\Omega \in \linf$ and $\Psi\in \zhinf$ such that
\[
\Phi = (z-\bar{z}) \Omega + \Psi.
\]
Set
\[
\Theta (z) = -\bar{z}^2 \widetilde\Omega.
\]
Clearly, $\Theta \in\linf$. By Lemma \ref{TH-id}, we have
\[
T_{z\Theta}- T_{z}T_\Theta = H_{I_E}H_{\bar{z} \widetilde\Theta},
\]
and
\[
T_{z} H_\Omega = H_{z\Omega} - H_{I_E}T_{\bar{z} \widetilde\Omega}.
\]
Here $I_E$ denotes the identity operator in $\clb(E)$. Using the above information, and the fact that $H_{I_E}=P_E$, we compute
\begin{align*}
S_E(T_{\Theta} + H_{\Omega}) -(T_{\Theta} + H_{\Omega}) S_E&= (T_{z}T_\Theta -T_{z\Theta}) + (T_{z}H_\Omega -H_\Omega T_{z})
\\
& = - H_{I_E}H_{\bar{z}\widetilde\Theta} + H_{z\Omega} -  H_{I_E}T_{\bar{z} \widetilde\Omega} -H_{\bar{z}\Omega}
\\
& = - H_{I_E}H_{\bar{z} \widetilde\Theta} +  H_{(z-\bar{z})\Omega} -  H_{I_E}T_{\bar{z} \widetilde\Omega}
\\
& =  P_EH_{z\Omega } +  H_{\Phi} -  P_ET_{\bar{z} \widetilde\Omega},
\end{align*}
that is,
\begin{equation}\label{s3-eq4}
S_E(T_{\Theta} + H_{\Omega}) -(T_{\Theta} + H_{\Omega}) S_E = H_\Phi + \left(P_EH_{z\Omega }-  P_ET_{\bar{z} \widetilde\Omega} \right).
\end{equation}
For each $e\in E$, we now have
\[
H_{z\Omega }^*(e)= H_{z \widetilde\Omega^*}(e) = P_{\he}(z \widetilde\Omega^*e),
\]
and also
\[
T^*_{\bar{z}\widetilde{\Omega}}(e)=T_{z\Omega(\bar{z})^*}(e)= P_{\he}(z \widetilde\Omega^*e).
\]
As a result, we have $H_{z\Omega }^*P_E = T^*_{\bar{z} \widetilde\Omega}P_E$, or, equivalently,
\[
P_EH_{z\Omega }=  P_ET_{\bar{z} \widetilde\Omega}.
\]
Hence we have from (\ref{s3-eq4}):
\[
S_E(T_{\Theta} + H_{\Omega}) -(T_{\Theta} + H_{\Omega}) S_E =  H_\Phi,
\]
that is, $A= T_{\Theta} + H_{\Omega}\in\clb(\he)$ is a solution of the operator equation \eqref{h-eq-1}. By the Foia\c{s}-Williams criterion (see \eqref{s3-eq1} again), we conclude that $\M_{S_E}^{S_E}(H_\Phi)$ is similar to a contraction. This completes the proof.
\end{proof}

In view of the discussion following \eqref{s3-eq2} and Theorem \ref{s3-th2}, we note that $\M^{S_E^*}_{S_E^*}(H_\Phi)$ is similar to a contraction if and only if
\[
\widetilde{\Phi}^* \in (z-\bar{z})\linf + \zhinf,
\]
or, equivalently
\[
\Phi \in (z-\bar{z})\linf + \zhinf.
\]
In other words, $\M^{S_E}_{S_E}(H_\Phi)$ is similar to a contraction if and only if $\M^{S_E^*}_{S_E^*}(H_\Phi)$ is similar to a contraction as well.

We conclude this section by addressing the remaining case of the similarity problem, namely, the similarity of $\M^{S_E^*}_{S_E}(X)$ to a contraction, where $X$ is a Toeplitz operator. Note that in this setting, the Foia\c{s}-Williams criterion  (see (\ref{Similar-intro-2})) for $\M^{S_E^*}_{S_E}(X)$ to be similar to a contraction becomes equivalent to the existence of an operator $A\in\clb(\he)$ such that
\[
S_E^* A-AS_E=X.
\]
Here we again use the characterization of Toeplitz + Hankel operators as we have done earlier in several places.

\begin{theorem}\label{s4-Th1}
Let $\Phi \in \linf$. Then $\M^{S^*_E}_{S_E}(T_{\Phi})$ is similar to a contraction if and only if
\[
\Phi \in (\bar{z} - z)\linf.
\]
\end{theorem}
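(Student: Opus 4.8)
The plan is to invoke the Foia\c{s}--Williams criterion \eqref{Similar-intro-2}. Since $S_E$ is an isometry and both $S_E^*$ and $S_E$ are themselves contractions (hence trivially similar to contractions), the criterion applies and tells us that $\M^{S^*_E}_{S_E}(T_{\Phi})$ is similar to a contraction if and only if the operator equation
\[
S_E^* A - A S_E = T_\Phi
\]
admits a solution $A \in \clb(\he)$. Thus the whole statement reduces to characterizing, in terms of $\Phi$, the solvability of this equation, and the key point will be that every solution $A$ is automatically a Toeplitz $+$ Hankel operator.

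For the forward implication I would start from a solution $A$ of $S_E^* A - A S_E = T_\Phi$ and rewrite it as $A S_E - S_E^* A = T_{-\Phi}$. The right-hand side is a Toeplitz operator, so by the equivalence (1)$\Leftrightarrow$(2) of Theorem \ref{thm: T + H}, $A$ is a Toeplitz $+$ Hankel operator, say $A = T_\Theta + H_\Omega$ with $\Theta, \Omega \in \linf$. Substituting this into the equation and using the Hankel identity $S_E^* H_\Omega = H_\Omega S_E$ makes the Hankel part drop out entirely, since $S_E^* H_\Omega - H_\Omega S_E = 0$. Combined with the routine symbol identities $S_E^* T_\Theta = T_{\bar z \Theta}$ and $T_\Theta S_E = T_{z \Theta}$ (recall $S_E = T_z$), this yields $T_\Phi = T_{\bar z \Theta} - T_{z\Theta} = T_{(\bar z - z)\Theta}$, and by uniqueness of the Toeplitz symbol we conclude $\Phi = (\bar z - z)\Theta \in (\bar z - z)\linf$.

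The converse is immediate and requires no new idea: given $\Phi = (\bar z - z)\Theta$ with $\Theta \in \linf$, simply take $A := T_\Theta$. Then $S_E^* A - A S_E = T_{\bar z \Theta} - T_{z\Theta} = T_{(\bar z - z)\Theta} = T_\Phi$, so the operator equation is solved, and the Foia\c{s}--Williams criterion gives that $\M^{S^*_E}_{S_E}(T_{\Phi})$ is similar to a contraction.

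There is no substantial obstacle in this argument; it is the shortest of the five cases. The only points that need care are verifying that the Foia\c{s}--Williams hypotheses are in force (which they are, $S_E$ being an isometry) and checking the elementary Toeplitz/Hankel symbol manipulations $S_E^* T_\Theta = T_{\bar z\Theta}$, $T_\Theta S_E = T_{z\Theta}$, and $S_E^* H_\Omega - H_\Omega S_E = 0$. All the conceptual content lies in the observation, via Theorem \ref{thm: T + H}, that solving $S_E^* A - A S_E = T_\Phi$ forces $A$ to be Toeplitz $+$ Hankel, after which the Hankel summand is invisible to the equation and only the Toeplitz summand constrains $\Phi$.
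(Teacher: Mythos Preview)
Your proposal is correct and follows essentially the same approach as the paper's proof: both reduce via the Foia\c{s}--Williams criterion to the equation $S_E^*A - AS_E = T_\Phi$, invoke Theorem~\ref{thm: T + H} to force $A = T_\Theta + H_\Omega$, observe the Hankel summand cancels, and read off $\Phi = (\bar z - z)\Theta$; the converse is handled identically by taking $A = T_\Theta$.
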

\begin{proof}
In the case where $X = T_\Phi$, the Foia\c{s}-Williams criterion, as discussed above, states that $\M^{S^*_E}_{S_E}(T_{\Phi})$ is similar to a contraction if and only if there exists $A\in \clb(\he)$ such that
\[
S_E^*A -A S_E = T_\Phi.
\]
Now, assume that $\M^{S^*_E}_{S_E}(T_{\Phi})$ is similar to a contraction. The characterization of Toeplitz + Hankel operators, as stated in Theorem \ref{thm: T + H}, then guarantees the existence of symbols $\Theta$ and $\Omega$ in $\linf$ such that
\[
A = T_\Theta + H_\Omega.
\]
As $ S_E^* H_\Omega = H_\Omega S_E$, it follows that
\[
T_\Phi = S_E^*(T_\Theta + H_\Omega) -(T_\Theta + H_\Omega) S_E = T_{(\bar{z}-z)\Theta},
\]
which gives
\[
\Phi = (\bar{z}-z)\Theta \in (\bar{z} - z)\linf.
\]
Conversely, assume that there exists a symbol $\Theta \in \linf$ such that $\Phi = (\bar{z}-z)\Theta$. Then it is easy to see that
\[
S_E^*T_\Theta -T_\Theta S_E = T_\Phi.
\]
Again the Foia\c{s}-Williams criterion, as pointed out above, guarantees that $\M^{S^*_E}_{S_E}(T_{\Phi})$ is similar to a contraction, completing the proof of this theorem.
\end{proof}

We remark that, as a consequence of a general theorem, it was observed in \cite[Section 5.1]{Cassier} that an operator of the form $M^{S_E}_{S_E}(X)$ is similar to a contraction if and only if it is similar to an isometry.

\section{Examples}

In this concluding section, we present some concrete examples, focusing on the scalar case, of the results obtained so far. We also provide some general remarks. In the case where $E = \mathbb{C}$, we will omit the subscripts in the notation for the Hardy space, bounded analytic functions, and related objects. For instance, we will simply write
\[
H^2_{\mathbb{C}}(\mathbb{D}) = H^2(\mathbb{D}),
\]
and also
\[
S_{\mathbb{C}} = S.
\]
We begin with a simple application of Theorem \ref{Cor-th1}. Pick a function $\theta\in H^\infty(\mathbb{D})$, and set
\[
\vp(z) = (1-\overline{z}^2)\theta(\overline{z}),
\]
for $z \in \T$-a.e. Clearly, $\vp\in L^\infty(\mathbb{T})$. It is evident that
\[
\vp\in(1-\overline{z}^2)L^\infty(\mathbb{T}).
\]
For $z \in \T$-a.e. we also compute
\begin{align*}
({\vp}^* + \widetilde{\vp}^*)(z) & = (1-z^2)\overline{\theta(\overline{z})}+(1-\overline{z}^2)\overline{\theta(z)}\\
&=(1-z^2)\left(\overline{\theta(\overline{z})}-\overline{z}^2\overline{\theta(z)}\right),
\end{align*}
which implies
\[
{\vp}^* + \widetilde{\vp}^* \in (1-z^2)\left(H^\infty(\mathbb{D})+\overline{zH^\infty(\mathbb{D})}\right).
\]
Theorem \ref{Cor-th1} then implies that ${M}^{S}_{S^*}(T_\vp)$ is similar to a contraction.

\subsection{BMOA functions} We will now focus on Hankel operators $H_\vp\in\clb(H^2(\D))$, $\vp\in L^\infty(\T)$. To this end, let us first introduce the BMOA functions (cf. \cite{Peller book}). Denote by $m$ the normalized Lebesgue measure on $\T$. Let $I$ be a subarc of $\T$, and let $f$ be an absolutely integrable function defined on $\T$. Define
$$
f_I =\frac{1}{m(I)}\int_I f\, dm.
$$
Then $f$ is called a function of bounded mean oscillation, if
$$
\sup_I\frac{1}{m(I)}\int_I|f-f_I|\,dm<\infty .
$$
The space of such functions is named as \emph{BMO}. The analytic subspace of BMO of interest is
\[
BMOA= BMO \cap H^2(\D),
\]
whose elements are known as bounded mean oscillation analytic functions. It is well known that
\[
BMOA=\{P_{H^2(\D)} (\vp): \vp\in L^\infty(\T)\}.
\]
Let us now pick $\vp\in L^\infty(\mathbb{T})$, and define
\[
f:=P_{H^2(\D)} (\vp).
\]
%From the above information, it is clear that $H_\vp\in\clb({H^2(\D))}$ if and only if $f\in\mbox{BMOA}$.
By virtue of the Theorem 4.4, Corollary 4.5 and 4.7 of \cite{Davidson-Paulsen}, it is immediately seen that the membership of $f^\prime$ in BMOA is equivalent to the fact that
\[
H_\vp D\in\mathcal{B}(H^2(\mathbb{D})).
\]
Consider now the Fourier expansion of $f$ on $\T$ as
\[
f(z)=\sum_{n \in \Z_+} a_nz^n.
\]
For each $k  \in \Z_+$, we compute
\begin{align*}
\left\langle (H_\vp DS)^*(1), z^k\right\rangle & = \left\langle1, (H_\vp DS)(z^k)\right\rangle
\\
&=\left\langle 1, (k+1)\sum_{n=k}^\infty a_nz^{n-k}\right\rangle
\\
&=(k+1)a_k.
\end{align*}
This implies
\[
(H_\vp DS)^*(1)=\sum_{n=0}^\infty (n+1)a_nz^n=(zf)^\prime.
\]
In view of the above discussion, we immediately see that Theorem \ref{Cor-th2} takes the following form for $E=\C$:

\begin{theorem}\label{s6-th1}
Let $\vp\in L^\infty(\T)$. Then $M^{S}_{S^*}(H_\vp)$ is similar to a contraction if and only if
\[
\left(P_{H^2(\D)}(\vp)\right)^\prime\in \mbox{BMOA},
\]
and
\[
\left(zP_{H^2(\D)}(\vp)\right)^\prime\in H^\infty(\mathbb{D}).
\]
\end{theorem}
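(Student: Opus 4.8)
The plan is to specialize Theorem \ref{Cor-th2} to the case $E = \C$ and translate its two conditions into the language of BMOA. By Theorem \ref{Cor-th2}, $M^S_{S^*}(H_\vp)$ is similar to a contraction if and only if (i) $H_\vp D \in \mathcal{B}(H^2(\D))$ and (ii) there exists $\Psi \in H^\infty(\D)$ with $(H_\vp D S)^* P_{\C} = \Psi P_{\C}$; since $E = \C$, the projection $P_{\C}$ is the rank-one projection onto the constants, so condition (ii) simply says that $(H_\vp D S)^*(1) \in H^\infty(\D)$ (interpreting $\Psi$ via $\Psi(1) = \Psi \cdot 1$, the Toeplitz operator $T_\Psi$ restricted to constants). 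So the task reduces to showing that (i) is equivalent to $f' \in \mbox{BMOA}$ where $f = P_{H^2(\D)}(\vp)$, and that $(H_\vp D S)^*(1) = (zf)'$.

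The first reduction---(i) $\Leftrightarrow f' \in \mbox{BMOA}$---I would simply cite from \cite{Davidson-Paulsen}: as noted in the excerpt immediately before the statement, Theorem 4.4 together with Corollaries 4.5 and 4.7 of \cite{Davidson-Paulsen} give exactly the equivalence of $H_\vp D \in \mathcal{B}(H^2(\D))$ with $f' \in \mbox{BMOA}$. The second reduction---computing $(H_\vp D S)^*(1)$---is the explicit calculation already carried out in the excerpt: pairing $(H_\vp D S)^*(1)$ against each monomial $z^k$ and using that $D S(z^k) = (k+1)z^k$ followed by $H_\vp(z^k) = \sum_{n \geq k} a_n z^{n-k}$ (here one uses the Fourier coefficients of $f = P_{H^2(\D)}(\vp)$, since $H_\vp$ only sees the co-analytic part of $\vp$ paired against $H^2$), yields $\langle (H_\vp D S)^*(1), z^k\rangle = (k+1)a_k$, hence $(H_\vp D S)^*(1) = \sum_{n \geq 0}(n+1)a_n z^n = (zf)'$. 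Thus condition (ii) becomes exactly $(zf)' = (z P_{H^2(\D)}(\vp))' \in H^\infty(\D)$.

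Putting these two translations together, Theorem \ref{Cor-th2} reads: $M^S_{S^*}(H_\vp)$ is similar to a contraction if and only if $f' \in \mbox{BMOA}$ and $(zf)' \in H^\infty(\D)$, which is precisely the claimed statement. The main (and only real) subtlety is making sure the passage from the operator-theoretic condition $(H_\vp D S)^* P_{\C} = \Psi P_{\C}$ with $\Psi \in H^\infty(\D)$ to the scalar condition $(zf)' \in H^\infty(\D)$ is airtight: one must check that a function $g \in H^2(\D)$ lies in $H^\infty(\D)$ exactly when it arises as $T_\Psi(1)$ for some $\Psi \in H^\infty(\D)$ (immediate, taking $\Psi = g$) and that $H_\vp D S$ is indeed bounded under condition (i) so that its adjoint makes sense---which is exactly the content of the boundedness argument at the end of the proof of Theorem \ref{Cor-th2}, transported to $E = \C$. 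Everything else is the bookkeeping of Fourier coefficients recorded above.
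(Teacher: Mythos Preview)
Your proposal is correct and follows essentially the same approach as the paper: both specialize Theorem \ref{Cor-th2} to $E=\C$, invoke the Davidson--Paulsen results to translate $H_\vp D\in\clb(H^2(\D))$ into $f'\in\mbox{BMOA}$, and carry out the identical Fourier-coefficient computation to identify $(H_\vp DS)^*(1)$ with $(zf)'$. Your additional remarks on why the scalar condition $(zf)'\in H^\infty(\D)$ is equivalent to the operator condition $(H_\vp DS)^*P_\C=\Psi P_\C$ are a minor elaboration of what the paper leaves implicit, but the argument is the same.
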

In particular, we can choose $\vp$ to be a complex polynomial, or, for each $|\alpha|>1$, we can choose $\vp$ as
\[
\vp(z) = \frac{1}{\alpha-z},
\]
for all $z\in\T$. For such choices of symbols $\vp$, we can conclude by Theorem \ref{s6-th1} that $M^{S}_{S^*}(H_\vp)$ is similar to a contraction.

\subsection{Hilbert-Hankel matrix} Consider the Hilbert-Hankel matrix
\[
H = \begin{bmatrix}
1 & \frac{1}{2} & \frac{1}{3} & \cdots \\
\\
\frac{1}{2} & \frac{1}{3} & \frac{1}{4} & \cdots \\
\\
\frac{1}{3} & \frac{1}{4} & \frac{1}{5} & \cdots \\
\cdot & \cdot & \cdot & \ddots
\end{bmatrix}.
\]
This is a linear operator defined on $l^2(\Z_+)$, and its boundedness was established by Hilbert. This classical matrix represents the Hankel operator corresponding to the symbol $\psi \in L^\infty(\mathbb{T})$ defined by
\[
\psi(e^{it}) =ie^{-it}(\pi -t),
\]
for all $t\in [0,2\pi)$. More specifically, we observe that
\[
P_{H^2(\mathbb{D})}(\psi) = \sum_{n=0}^\infty\frac{z^n}{n+1}\in\mbox{BMOA}.
\]
However,
$$
\left(P_{H^2(\mathbb{D})}(\psi)\right)^\prime=\sum_{n=1}^\infty\frac{n}{n+1}z^{n-1}\notin H^2(\D),
$$

As a consequence of Theorem \ref{s6-th1}, we have the following result:

\begin{corollary}\label{cor: Hilbert Hankel}
${M}^{S}_{S^*}(H_\psi)$ is not similar to a contraction.
\end{corollary}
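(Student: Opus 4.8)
The plan is to apply Theorem \ref{s6-th1} with $\vp = \psi$, and show that the second of its two conditions fails, while being careful that a convenient representative of the BMOA function $P_{H^2(\D)}(\psi)$ is available. First I would record (or verify) the Fourier expansion $P_{H^2(\D)}(\psi) = \sum_{n \geq 0} \frac{z^n}{n+1}$; this identification is the classical fact that the Hilbert matrix $\left[\frac{1}{i+j+1}\right]_{i,j\geq 0}$ is the Hankel operator with this analytic symbol, and it can be checked directly from the Fourier coefficients of $\psi(e^{it}) = ie^{-it}(\pi - t)$ by computing $\widehat{\psi}(-n-1) = \frac{1}{2\pi}\int_0^{2\pi} ie^{-it}(\pi-t)e^{i(n+1)t}\,dt = \frac{1}{n+1}$ for $n \geq 0$, and matching this against the matrix entries of $H_\psi$ with respect to the basis $\{z^j\}$. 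That $P_{H^2(\D)}(\psi)$ lies in BMOA is automatic since BMOA $= \{P_{H^2(\D)}(\vp) : \vp \in L^\infty(\T)\}$.

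Next I would examine the derivative. We have
\[
\bigl(P_{H^2(\D)}(\psi)\bigr)' = \sum_{n=1}^{\infty} \frac{n}{n+1} z^{n-1} = \sum_{m=0}^{\infty} \frac{m+1}{m+2} z^{m},
\]
and since $\frac{m+1}{m+2} \to 1 \neq 0$ as $m \to \infty$, the sequence of Taylor coefficients is not square-summable; hence $\bigl(P_{H^2(\D)}(\psi)\bigr)' \notin H^2(\D)$, and a fortiori $\notin H^\infty(\D)$. The only subtlety is that Theorem \ref{s6-th1} asks about $\bigl(z P_{H^2(\D)}(\psi)\bigr)'$, not $\bigl(P_{H^2(\D)}(\psi)\bigr)'$; but if we write $f = P_{H^2(\D)}(\psi)$, then $(zf)' = f + z f'$, and since $f \in H^2(\D) \subseteq$ (in fact $f \in$ BMOA hence in every $H^p$, $p < \infty$), membership of $(zf)'$ in $H^\infty(\D)$ would force $zf' = (zf)' - f$ to be in $H^2(\D)$, hence $f' \in H^2(\D)$, a contradiction. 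Thus the second condition of Theorem \ref{s6-th1} fails, so $M^S_{S^*}(H_\psi)$ is not similar to a contraction.

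The argument is short and the only place that requires genuine care is the first step: pinning down that the symbol $\psi(e^{it}) = ie^{-it}(\pi - t)$ really does produce the Hilbert matrix and that its analytic Riesz projection is $\sum_{n \geq 0} \frac{z^n}{n+1}$. This is where I would spend most of the effort, since the rest is a one-line divergence check. One can either cite the classical computation (the Fourier series of the sawtooth function $\pi - t$ on $[0,2\pi)$ is $\sum_{k \neq 0} \frac{1}{ik} e^{ikt}$, so $ie^{-it}(\pi - t) = \sum_{k\neq 0} \frac{1}{k} e^{i(k-1)t} = \sum_{j \neq -1} \frac{1}{j+1} e^{ijt}$, whose nonpositive-frequency part, after applying the flip and projecting, yields the stated analytic function), or simply note that this representation is already asserted in the text preceding the corollary. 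I would present the divergence of $\sum \frac{m+1}{m+2} z^m$ as the decisive point, invoke Theorem \ref{s6-th1}, and conclude.
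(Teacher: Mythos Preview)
Your argument is correct and rests on exactly the same computation the paper uses: the Taylor coefficients $\tfrac{m+1}{m+2}$ of $f' := (P_{H^2(\D)}\psi)'$ do not tend to zero, so $f' \notin H^2(\D)$. The only difference is which condition of Theorem~\ref{s6-th1} you then violate. The paper takes the shorter route: since $\mathrm{BMOA}\subset H^2(\D)$, the fact $f'\notin H^2(\D)$ already gives $f'\notin\mathrm{BMOA}$, so the \emph{first} condition fails and you are done. You instead show the second condition fails via $(zf)' = f + zf'$; this is valid but adds an unnecessary detour.

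One small inaccuracy to fix: under the paper's convention $H_\Phi = P_{H^2}M_\Phi J$, the matrix entries are $\langle H_\psi z^j, z^k\rangle = \widehat\psi(j+k)$, so it is the \emph{nonnegative} Fourier coefficients of $\psi$ that matter, not $\widehat\psi(-n-1)$. Your sawtooth computation in fact gives $\widehat\psi(n)=\tfrac{1}{n+1}$ for $n\geq 0$ directly (no flip needed), which is exactly what yields $P_{H^2(\D)}(\psi)=\sum_{n\geq 0}\tfrac{z^n}{n+1}$; and as you note, this identity is already stated in the text preceding the corollary.
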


The same conclusion holds for the operator $M_{S}^{S^*}(H_\psi)$, since it follows from \cite[Theorems 4.1 and 4.2]{Aleksandrov} that the operator $M_{S}^{S^*}(H_\psi)$ is similar to a contraction if and only if
\[
\left(P_{H^2(\mathbb{D})}(\psi)\right)^\prime \in \text{BMOA}.
\]
However, in the present case, we have already seen that
\[
\left(P_{H^2(\mathbb{D})}(\psi)\right)^\prime \notin \text{BMOA}.
\]

\subsection{General remarks} Addressing the similarity problem for Foguel-type operators is a natural and significant question in the theory of Hilbert function spaces. Moreover, we remark that Paulsen’s identification of the ``similarity to a contraction'' problem with the notion of complete polynomial boundedness \cite{Paulsen1} highlights the inherent difficulty of characterizing operators similar to contractions. Indeed, complete polynomial boundedness requires verification at all matrix levels, as it involves the contractivity of operator-valued polynomial matrices of arbitrary size. From all these perspectives, we feel that the results reported here, all concrete in nature, will be relevant for different purposes as well.

Let us conclude this paper with a curious observation regarding Theorems \ref{s3-th2} and \ref{s4-Th1}: Given $\Phi\in\linf$, if $M^{S^*_E}_{S_E}(T_\Phi)$ is similar to a contraction, then so is $M_{S_E}^{S_E}(H_\Phi)$.

\bigskip

\noindent\textbf{Acknowledgement:} The first named author is supported by a National Postdoctoral Fellowship (N-PDF) provided by the Anusandhan National Research Foundation, India (File number: PDF/2025/000089). The research of the second named author is supported by a post-doctoral fellowship provided by the National Board for Higher Mathematics (NBHM), India (Order No: 0204/16(8)/2024/R\&D- II/6760, dated May 09, 2024). The research of the third named author is supported in part by ANRF, Department of Science \& Technology (DST), Government of India (File No: ANRF/ARGM/2025/000130/MTR).


\begin{thebibliography}{99}
\bibitem{Aleksandrov}
A. B. Aleksandrov and V. V. Peller, {\em Hankel operators and similarity to a contraction}, Internat. Math. Res. Notices. 6 (1996), 263–275.


\bibitem{Burgain}
J. Bourgain, {\em On the similarity problem for polynomially bounded operators}, Israel J. Math. 54 (1986), 227–241.
	
\bibitem{CCFW}
J. F. Carlson, D. N. Clark, C. Foia\c{s} and J. P. Williams, {\em  Projective Hilbert $A(\D)$-modules}, New York J. Math. 1 (1994), 26--38.
	
\bibitem{Cassier}
G. Cassier,  {\em Generalized Toeplitz operators, restrictions to invariant subspaces and similarity problems}, J. Operator Theory 53 (2005), 49–-89.
	
\bibitem{Cassier-Timotin}
G. Cassier and D. Timotin, {\em Power boundedness and similarity to contractions for some perturbations of isometries}, J. Math. Anal. Appl. 293 (2004), 160--180.
	

\bibitem{NSJ}
N. Das, S. Das, and J. Sarkar, {\em Paired and Toeplitz + Hankel operators}, available at arXiv:2404.05435. To appear in Israel J. Math.
	
\bibitem{Davidson-Paulsen}
K. R. Davidson and V. I. Paulsen, {\em Polynomially bounded operators}, J. Reine Angew. Math. 487 (1997), 153–170.
	
\bibitem{Davidson}
K. R. Davidson, {\em Polynomially bounded operators, a survey}, NATO Adv. Sci. Inst. Ser. C: Math. Phys. Sci., 495, Kluwer Academic Publishers Group, Dordrecht, 1997, 145–162.
	
\bibitem{Deift}
P. Deift, A. Its, and I. Krasovsky, {\em Asymptotics of Toeplitz, Hankel and Toeplitz+Hankel determinants with Fisher-Hartwig singularities}, Ann. of Math. (2) 174 (2011), 1243--1299.
	
\bibitem{Ehr}
T. Ehrhardt, R. Hagger, and J. Virtanen, {\em Bounded and compact Toeplitz + Hankel matrices}, Studia Math. 260 (2021), 103--120.
	
\bibitem{Foguel}
S. R. Foguel, {\em A counterexample to a problem of Sz.-Nagy}, Proc. Amer. Math. Soc. 15 (1964), 788–790.
		
\bibitem{Foias-Williams}
C. Foia\c{s} and J. Williams, {\em On a class of polynomially bounded operators}, preprint circa 1980, unpublished.
	

\bibitem{H-64}
P. R. Halmos, {\em On Foguel's answer to Nagy's question}, Proc. Amer. Math. Soc. 15 (1964), 791–793.

\bibitem{Halmos}
P. R. Halmos, {\em Ten problems in Hilbert space}, Bull. Amer. Math. Soc. 76 (1970), 887-933.


\bibitem{Nagy 59}
B. Sz.-Nagy, {\em Completely continuous operators with uniformly bounded iterates}, Magyar Tud. Akad. Mat. Kutató Int. Közl. 4 (1959), 89–93.
	
	
\bibitem{Nagy}
B. Sz.-Nagy, {\em  On uniformly bounded linear transformations in Hilbert space}, Acta Univ. Szeged. Sect. Sci. Math. 11 (1947), 152--157.
	
\bibitem{Paulsen1}
V. I. Paulsen, {\em Every completely polynomially bounded operator is similar to a contraction}, J. Funct. Anal. 55 (1984), no. 1, 1–17.
	
\bibitem{Paulsen}
V. I. Paulsen, {\em Relative Yoneda cohomology for operator spaces}, J. Funct. Anal. 157 (1998), 358--393.
	
	
\bibitem{VPeller}
V. V. Peller, {\em Estimates of functions of power bounded operators in Hilbert spaces}, J. Oper. Th. 7 (1982), 341–372.

\bibitem{Peller book}
V. V. Peller, {\em Hankel operators and their applications}, Springer Monogr. Math. Springer-Verlag, New York, (2003).

\bibitem{Pisier2}
G. Pisier, {\em A polynomially bounded operator on Hilbert space which is not similar to a contraction}, J. Amer. Math. Soc. 10 (1997), no. 2, 351–369.
	
\bibitem{Pisier1}
G. Pisier, {\em  The Halmos similarity problem}, Oper. Theory Adv. Appl., 207 Birkhäuser Verlag, Basel, 2010, 325–339.
	
\bibitem{Radj-Rosen} H. Radjavi, P. Rosenthal, {\em Invariant subspaces}, Dover Publications, Inc., Mineola, NY, 2003, xii+248 pp.
	
\bibitem{vN}
J. von Neumann, {\em Eine Spektraltheorie f\"{u}r allgemeine Operatoren eines unit\"{a}ren Raumes}, Math. Nachr. 4 (1951), 48–131.
	
	
\end{thebibliography}
\end{document}